\newcommand{\Z}{\ensuremath{\mathbb{Z}}}
\newcommand{\N}{\ensuremath{\mathbb{N}}}
\newcommand{\C}{\ensuremath{\mathbb{C}}}
\newcommand{\A}{\ensuremath{\mathcal{A}}}
\newcommand{\B}{\ensuremath{\mathcal{B}}}
\newcommand{\V}{\ensuremath{\mathcal{V}}}
\newcommand{\D}{\ensuremath{\mathcal{D}}}
\newcommand{\F}{\ensuremath{\mathcal{F}}}
\renewcommand{\O}{\ensuremath{\mathcal{O}}}
\newcommand{\MM}{\ensuremath{\mathrm{M}}}
\newcommand{\PP}{\ensuremath{\mathcal{P}}}
\newcommand{\CC}{\ensuremath{\mathcal{C}}}
\newcommand{\op}{\ensuremath{\mathrm{op}}}
\newcommand{\z}{\ensuremath{\mathbf{0}}}
\newcommand{\up}{\ensuremath{{\uparrow\,}}}
\newcommand{\down}{\ensuremath{{\downarrow\,}}}
\theoremstyle{definition} 
\newtheorem{theorem}{Theorem}
\newtheorem{definition}{Definition}
\newtheorem{lemma}{Lemma}
\newtheorem{proposition}{Proposition}
\newtheorem{corollary}{Corollary}
\newtheorem*{convention}{Convention}
\title{Classifying finite-dimensional C*-algebras by posets of their commutative C*-subalgebras}
\author{Bert Lindenhovius}
\begin{document}

\maketitle
\emph{Dedicated to my sister.}
\abstract{We consider the functor $\CC$ that to a unital C*-algebra $A$ assigns the partial order set $\CC(A)$ of its commutative C*-subalgebras ordered by inclusion. We investigate how some C*-algebraic properties translate under the action of $\CC$ to order-theoretical properties. In particular, we show that $A$ is finite dimensional if and only $\CC(A)$ satisfies certain chain conditions. We eventually show that if $A$ and $B$ are C*-algebras such that $A$ is finite dimensional and $\CC(A)$ and $\CC(B)$ are order isomorphic, then $A$ and $B$ must be *-isomorphic.}

\section{Introduction}

Given a C*-algebra $A$ with unit $1_A$, let $\CC(A)$ be the set of all commutative unital C*-subalgebras $C$ of $A$ such that $1_A\in C$. Then $\CC(A)$ becomes a poset\footnote{We refer to the appendix for an overview of the necessary concepts in order theory.} if we order it by inclusion. Now, one could consider the following question: is it possible to recover the structure of $A$ as a C*-algebra from the poset $\CC(A)$? More precisely, if $A$ and $B$ are C*-algebras such that $\CC(A)$ and $\CC(B)$ are isomorphic as posets, can we find an *-isomorphism between $A$ and $B$?

Apart from its mathematical relevance, this problem is of considerable importance for the so called \emph{quantum toposophy} program, where one tries to describe quantum mechanics in terms of topos theory (see e.g., \cite{BI}, \cite{DI}, \cite{HLS}, \cite{Wolters}). In this program, the central objects of research are the topoi $\mathrm{Sets}^{\CC(A)}$ and $\mathrm{Sets}^{\CC(A)^\op}$. The motivation behind at least \cite{HLS} and \cite{Wolters} is Niels Bohr's doctrine of classical concepts, which, roughly speaking, states that a measurement provides a ``classical snapshot of quantum reality''. Mathematically, this corresponds with an element of $\CC(A)$, and knowledge of all classical snapshots should provide a picture of quantum reality, that is as complete as (humanly) possible. The possiblility of reconstructing $A$ from $\CC(A)$ would assure
the soundness of this doctrine.

For commutative C*-algebras, Mendivil showed that the answer to the question is affirmative (see \cite{Mendivil2}). It turns out that more can be said about the commutative case. As we will see, $\CC$ turns out to be a functor from unital C*-algebras to posets. Hence if $f:A\to B$ is a *-isomorphism, then $\CC(f):\CC(A)\to\CC(B)$ is an order isomorphism. In \cite{Hamhalter}, Hamhalter gave not only a different proof of Mendivil's statement that an order isomorphism $\psi:\CC(A)\to\CC(B)$, with $A$ and $B$ commutative C*-algebras, yields an *-isomorphism $f:A\to B$, but he showed as well that this *-isomorphism can be constructed in such a way that $\psi=\CC(f)$. Moreover, he proved that as long as $A$ is not two dimensional, there is only one *-isomorphism that induces $\psi$ in this way.

For non-commutative C*-algebras however, the answer is negative, since Connes \cite{Connes} showed the existence of a C*-algebra $A_c$ (actually even a von Neumann algebra) that is not isomorphic to its opposite algebra $A_c^\op$. Here the opposite algebra is the C*-algebra with the same underlying topological vector space, but with multiplication defined by $(a,b)\mapsto ba$, where $(a,b)\mapsto ab$ denotes the original multiplication. Since $\CC(A)$ is always isomorphic to $\CC(A^\op)$ as poset, for each C*-algebra $A$, the existence of Connes' C*-algebra $A_c$ shows that the order structure of $\CC(A)$ is not always enough in order to reconstruct $A$. More recent counterexamples can be found in \cite{Phillips} and \cite{PV}.

Nevertheless, there are still problems one could study. For instance, in \cite{DH} D\"oring and Harding consider a functor similar to $\CC$, namely the functor $\V$ assigning to a von Neumann algebra $M$ the poset $\V(M)$ of its commutative von Neumann subalgebras, and prove that one can reconstruct the \emph{Jordan} structure, i.e., the anticommutator $(a,b)\mapsto ab+ba$, 
 of $M$ from $\V(M)$. Similarly, in \cite{Hamhalter}, it is shown that if $\CC(A)$ and $\CC(B)$ are order isomorphic, then there exists a quasi-linear Jordan isomorphism between $A_{\mathrm{sa}}$ and $B_{\mathrm{sa}}$, the sets of self-adjoint elements of $A$ and $B$, respectively. Here quasi-linear means linear with respect to elements that commute. In \cite{Hamhalter2}, it is even shown that this quasi-linear Jordan isomorphism is linear when $A$ and $B$ are \emph{AW*-algebras}. 
 
 Moreover, one could replace $\CC(A)$ by a structure with stronger properties. An example of such a structure is an \emph{active lattice}, defined in \cite{HR}, where Heunen and Reyes also show that this structure is strong enough to determine AW*-algebras completely.

In this paper, however, we will only study $\CC$ as a functor and as an invariant for C*-algebras. We shall examine some of its properties, and, as our main result, we shall prove that if $A$ is finite dimensional, $\CC(A)$ has enough structure to determine $A$ up to *-isomorphism. The way this result is proven considerably differs from the methods of Hamhalter and Mendivil in the commutative case, since we cannot use Gel'fand duality and the ensuing topological methods as they do. Our approach also differs from Hamhalter's methodes in the sense that if $\psi:\CC(A)\to\CC(B)$ is an order isomorphism with $A$ finite dimensional, we do not construct a *-isomorphism $f:A\to B$ such that $\CC(f)=\psi$, since it will turn out that this is not always possible. Instead, we look at order-theoretical invariants in $\CC(A)$ that reflect the C*-algebraic invariants in $A$ determinining $A$ up to *-isomorphism.   

The first step is to find order-theoretical properties of $\CC(A)$ that corresponds to $A$ being finite dimensional. These properties turn out to be chain conditions on $\CC(A)$. The next step is to identify $Z(A)$, the center of $A$, as element of $\CC(A)$. This turns out to be the meet of all maximal elements of $\CC(A)$. Finally, we consider the interval $[Z(A),M]$ in $\CC(A)$, where $M$ is any maximal element. This interval is a lattice, which factors into a product of directly indecomposable lattices. The length of the maximal chains in these lattices corresponds exactly to the dimensions of the matrix algebra factors of $A$, since by the Artin-Wedderburn Theorem $A$ factors in a unique way into a product of matrix algebras.

\section{C*-subalgebras of a commutative C*-algebra}

\begin{convention}
All C*-algebras in this article are assumed to be unital.
\end{convention}

\begin{definition}\label{def:subalgebraequivalencerelation}
	Let $A$ be a C*-subalgebra of $C(X)$. Then we define an equivalence relation $\sim_A$ on $X$ by $x\sim_A y$ if and only if $f(x)=f(y)$ for each $f\in A$. We denote the equivalence class of $x$ under this equivalence relation by $[x]_A$. 
\end{definition}

\begin{lemma}\cite[Proposition 5.1.3]{Weaver}\label{lem:subalgofCXisquotientofX}
	A C*-algebra $A$ is a C*-subalgebra of $C(X)$ if and only if there is a compact Hausdorff space $Y$ and a continuous surjection $q:X\to Y$ such that $C_q[C(Y)]=A$. The map $C_q:C(Y)\to C(X)$, $f\mapsto f\circ q$ is an isometric *-homomorphism in this case, and $Y$ and $X/\sim_A$ are homeomorphic.
\end{lemma}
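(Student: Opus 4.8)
The plan is to prove both implications directly and then identify the space $Y$. For the easy (``if'') direction, suppose $q:X\to Y$ is a continuous surjection onto a compact Hausdorff space $Y$. I would first check that $C_q$ is an isometric unital $*$-homomorphism: it is clearly linear, multiplicative, $*$-preserving and sends $1_Y$ to $1_X$, while surjectivity of $q$ gives $\|f\circ q\|_\infty=\sup_{x\in X}|f(q(x))|=\sup_{y\in Y}|f(y)|=\|f\|_\infty$. Consequently $C_q$ is injective, and its image $C_q[C(Y)]$ is complete, hence norm-closed in $C(X)$; being also a unital $*$-subalgebra, it is a C*-subalgebra, so $A=C_q[C(Y)]$ is indeed a C*-subalgebra of $C(X)$.

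For the ``only if'' direction, given a C*-subalgebra $A\subseteq C(X)$ I would take $Y=X/\sim_A$ equipped with the quotient topology and let $q:X\to Y$ be the quotient map. Compactness of $Y$ is automatic as a continuous image of $X$. For the Hausdorff property, note that if $[x]_A\neq[y]_A$ there is $f\in A$ with $f(x)\neq f(y)$; since $f$ is constant on $\sim_A$-classes it descends to a continuous function on $Y$ separating the two classes, and a space whose points are separated by continuous functions is Hausdorff. The crux is to show $C_q[C(Y)]=A$. By the universal property of the quotient map, $C_q[C(Y)]$ is exactly the set of $g\in C(X)$ that are constant on $\sim_A$-classes, and every $f\in A$ has this property by the very definition of $\sim_A$; this gives $A\subseteq C_q[C(Y)]$.

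The reverse inclusion is the main obstacle, and I expect to resolve it with the Stone--Weierstrass theorem together with the fact that $A$ is norm-closed. Consider $\widetilde A=C_q^{-1}[A]\subseteq C(Y)$. It is a unital $*$-subalgebra, and it separates the points of $Y$: distinct classes are separated by some $f\in A$, and the function it induces on $Y$ lies in $\widetilde A$. By Stone--Weierstrass, $\widetilde A$ is dense in $C(Y)$. Moreover $\widetilde A$ is closed: if $\widetilde g_n\to \widetilde g$ in $C(Y)$ then $\widetilde g_n\circ q\to\widetilde g\circ q$ in $C(X)$ by the isometry property, and since each $\widetilde g_n\circ q\in A$ and $A$ is closed we get $\widetilde g\circ q\in A$, that is $\widetilde g\in\widetilde A$. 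A dense closed subset equals the whole space, so $\widetilde A=C(Y)$ and hence $C_q[C(Y)]\subseteq A$, giving equality.

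Finally, to establish that for an arbitrary continuous surjection $q:X\to Y$ with $C_q[C(Y)]=A$ the space $Y$ is homeomorphic to $X/\sim_A$, I would identify the fibres of $q$ with the $\sim_A$-classes. Since $Y$ is compact Hausdorff, $C(Y)$ separates its points, so $q(x)=q(x')$ if and only if $(f\circ q)(x)=(f\circ q)(x')$ for all $f\in C(Y)$, equivalently $g(x)=g(x')$ for all $g\in A$, equivalently $x\sim_A x'$. Thus $q$ induces a continuous bijection $X/\sim_A\to Y$, which is a homeomorphism because $X/\sim_A$ is compact and $Y$ is Hausdorff. This also covers the space constructed in the ``only if'' direction.
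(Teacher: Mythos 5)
Your proof is correct and complete. Note that the paper gives no proof of this lemma at all---it is imported from Weaver \cite[Proposition 5.1.3]{Weaver} as a citation---so there is no in-paper argument to compare against; yours is the standard proof: isometry of $C_q$ plus completeness of the image for the easy direction, Stone--Weierstrass applied to the closed, unital, point-separating *-subalgebra $\widetilde{A}=C_q^{-1}[A]$ of $C(X/\sim_A)$ for the hard inclusion $C_q[C(Y)]\subseteq A$, and the continuous-bijection-from-compact-to-Hausdorff argument for the identification of $Y$ with $X/\sim_A$. You also handle the two points most often glossed over: that $\widetilde{A}$ is norm-closed (pulled back along the isometry from closedness of $A$) and that the induced bijection $X/\sim_A\to Y$ is continuous (via the universal property of the quotient topology), so the argument stands as written.
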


\begin{definition}\label{def:idealalgebra}
	Let $X$ be compact Hausdorff and let $K\subseteq X$ be closed. Then we denote the C*-subalgebra $$\{f\in C(X):f\mathrm{\ is\ constant\ on\ }K\}$$ by $C_K$.
\end{definition}

\begin{lemma}\label{lem:subalgebraisintersectionofidealalgebras}\cite[Proposition 2.2]{Hamhalter}
	Let $A$ be a C*-subalgebra of $C(X)$. Then $A=\bigcap_{x\in X}C_{[x]_A}$, where $[x]_A$ is interpreted as a subset of $X$ (rather than as a point in $X/\sim_A$). 
\end{lemma}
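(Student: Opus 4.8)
The plan is to prove the two inclusions separately, with the reverse inclusion being the substantive one. For $A\subseteq\bigcap_{x\in X}C_{[x]_A}$, I would argue directly from Definition \ref{def:subalgebraequivalencerelation}: if $f\in A$ and $y\in[x]_A$, then $y\sim_A x$ forces $f(y)=f(x)$, so $f$ is constant on $[x]_A$; as $x$ ranges over $X$ this says $f\in C_{[x]_A}$ for every $x$, giving the inclusion.

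For the reverse inclusion $\bigcap_{x\in X}C_{[x]_A}\subseteq A$, I would invoke Lemma \ref{lem:subalgofCXisquotientofX} to obtain a compact Hausdorff space $Y$ and a continuous surjection $q:X\to Y$ with $C_q[C(Y)]=A$ and $Y$ homeomorphic to $X/\sim_A$. The first step is to identify the fibres of $q$ with the equivalence classes: since $C(Y)$ separates the points of the compact Hausdorff space $Y$, and every element of $A$ has the form $g\circ q$ with $g\in C(Y)$, we get $x\sim_A y$ iff $g(q(x))=g(q(y))$ for all $g\in C(Y)$ iff $q(x)=q(y)$. Hence $[x]_A=q^{-1}(q(x))$ for every $x\in X$.

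Now take $f$ in the intersection, so that $f$ is constant on every fibre of $q$. Then $f$ factors set-theoretically as $f=\bar f\circ q$ for a unique function $\bar f:Y\to\C$. The crucial point is the continuity of $\bar f$: because $X$ is compact and $Y$ is Hausdorff, the continuous surjection $q$ is a closed map, hence a quotient map, so $\bar f$ is continuous precisely because $\bar f\circ q=f$ is continuous. Thus $\bar f\in C(Y)$ and $f=C_q(\bar f)\in C_q[C(Y)]=A$, which establishes the inclusion.

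The first direction is essentially immediate from the definitions, so I expect the main obstacle to be the continuity of the induced map $\bar f$ on the quotient. Everything hinges on $q$ being a quotient map, which I would secure from the standard fact that a continuous surjection from a compact space onto a Hausdorff space is closed; the separation of points by $C(Y)$, used to match the fibres of $q$ with the equivalence classes of $\sim_A$, is the second place where the compact Hausdorffness of $Y$ enters.
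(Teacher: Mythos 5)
Your proof is correct, and it follows the same overall strategy as the paper's: both directions rest on Lemma \ref{lem:subalgofCXisquotientofX}, and the reverse inclusion is proved by factoring $f$ through $q$ and checking continuity of the induced map. The difference lies in how that continuity is secured. The paper takes $Y$ to be $X/\sim_A$ itself, equipped with the quotient topology, with $q$ the quotient map; then $g^{-1}[U]$ is open in $Y$ if and only if $q^{-1}[g^{-1}[U]]=f^{-1}[U]$ is open in $X$, by the very definition of the quotient topology, so continuity of the induced function is immediate. You instead keep the $Y$ supplied by Lemma \ref{lem:subalgofCXisquotientofX} abstract, which obliges you to prove two additional facts: that the fibres of $q$ coincide with the $\sim_A$-classes (via point separation of $C(Y)$ on the compact Hausdorff space $Y$, i.e.\ Urysohn's Lemma), and that $q$ is a quotient map (a continuous surjection from a compact space onto a Hausdorff space is closed, hence a quotient map). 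Both facts are standard and you apply them correctly. What your version buys: it does not presuppose that the $q$ of Lemma \ref{lem:subalgofCXisquotientofX} is literally the quotient map of $\sim_A$ --- the lemma as stated only asserts a homeomorphism $Y\cong X/\sim_A$ --- so your argument is slightly more careful on that point. What the paper's version buys: by identifying $Y$ with the quotient from the outset it bypasses Urysohn and the closed-map argument entirely, reducing the continuity step to a one-line appeal to the definition of the quotient topology.
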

\begin{proof}
	By Lemma \ref{lem:subalgofCXisquotientofX}, we have $A=C_q[C(Y)]$ with $Y=X/\sim_A$ and $q:X\to Y$ the quotient map. So each $f\in A$ is of the form $g\circ q$ for some $g\in C(Y)$, meaning that $f$ is constant on $[x]_A$ for each $x\in X$. Conversely, let $f$ be constant on $[x]_A$ for each $x\in X$. Define $g:Y\to X$ by $g([x]_A)=f(x)$. Then $g$ is well defined, since $f$ is constant on $[x]_A$, and $f=g\circ q$. Moreover, let $U\subseteq\C$ be open. Then $g^{-1}[U]$ is open in $Y$ if and only if $q^{-1}[g^{-1}[U]]$ is open in $X$ by definition of the quotient topology. Since $f=g\circ q$, we find $g^{-1}[U]$ is open in $Y$ if and only if $f^{-1}[U]$ is open in $X$. Since $f$ is continuous, we find that $g^{-1}[U]$ is open, so $g\in C(X)$. Hence $f$ lifts to a function on $C(Y)$, so $f\in A$. Thus
	\begin{align*}
		A & =  \{f\in C(X):f\ \mathrm{is\ constant\ on\ }[x]_A\ \forall x\in X\}\\
		& =  \bigcap_{x\in X}\{f\in C(X):f\ \mathrm{is\ constant\ on\ }[x]_A\}\\
		& =  \bigcap_{x\in X}C_{[x]_A}. \qedhere
	\end{align*}
\end{proof}

\section{The functor $\CC$}

\begin{definition}
	Let $A$ be a C*-algebra with unit $1_A$. We denote the set of its commutative C*-subalgebras containing $1_A$ by $\CC(A)$.
\end{definition}

If we denote the category of unital C*-algebras with unital *-homomorphisms as morphisms by $\mathbf{uCStar}$ and the category of posets with order morphisms as morphisms by $\mathbf{Poset}$, then $$\CC:\mathbf{uCStar}\to\mathbf{Poset}$$ can be made into a functor \cite[Proposition 5.3.3]{Heunen}.
\begin{lemma}\label{lem:CAisinvariantforA}
	$\CC:\mathbf{uCStar}\to\mathbf{Poset}$ becomes a functor if for *-homomorphisms $f:A\to B$ between C*-algebras $A$ and $B$ we define $\CC(f):\CC(A)\to\CC(B)$ by $C\mapsto f[C]$.
\end{lemma}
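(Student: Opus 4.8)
The plan is to verify the two functor axioms: that $\CC(f)$ is a well-defined order morphism for each *-homomorphism $f$, and that $\CC$ respects identities and composition.

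First I would check that $\CC(f)$ is well-defined, i.e., that $f[C]\in\CC(B)$ whenever $C\in\CC(A)$. Since $f$ is a unital *-homomorphism, the image $f[C]$ is a *-subalgebra of $B$ containing $f(1_A)=1_B$, and it is commutative because $C$ is: for $f(a),f(b)\in f[C]$ with $a,b\in C$ we have $f(a)f(b)=f(ab)=f(ba)=f(b)f(a)$. The one genuinely analytic point is that $f[C]$ must be \emph{closed} to be a C*-subalgebra. This is the step I expect to be the main obstacle, since the set-theoretic image of a continuous linear map need not be closed in general. Here, however, it follows from the standard fact that a *-homomorphism between C*-algebras has closed range: the image $f[C]$ is *-isomorphic to the quotient $C/(C\cap\ker f)$, which is itself a C*-algebra, and the induced injective *-homomorphism onto $f[C]$ is automatically isometric, hence has complete, therefore closed, image. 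I would cite this closed-range property rather than reprove it.

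Next I would verify that $\CC(f)$ is monotone (an order morphism). This is immediate from the fact that taking images preserves inclusions: if $C\subseteq D$ in $\CC(A)$, then $f[C]\subseteq f[D]$, so $\CC(f)(C)\subseteq\CC(f)(D)$ in $\CC(B)$.

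Finally I would check functoriality on morphisms. For the identity, $\CC(\mathrm{id}_A)(C)=\mathrm{id}_A[C]=C$, so $\CC(\mathrm{id}_A)=\mathrm{id}_{\CC(A)}$. For composition, given $f:A\to B$ and $g:B\to D$, both $\CC(g\circ f)$ and $\CC(g)\circ\CC(f)$ send $C$ to $(g\circ f)[C]=g[f[C]]$, using the elementary set-theoretic identity $(g\circ f)[C]=g\bigl[f[C]\bigr]$; hence $\CC(g\circ f)=\CC(g)\circ\CC(f)$. These verifications are routine once the well-definedness (and in particular the closedness of $f[C]$) has been established, so the only substantive content lies in that first step.
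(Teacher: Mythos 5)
Your proposal is correct and follows essentially the same route as the paper: the paper also reduces the only substantive point, closedness of $f[C]$, to the First Isomorphism Theorem for C*-algebras (citing \cite[Theorem 3.1.6]{Murphy}), which is precisely the closed-range fact you invoke, and then treats monotonicity, identities and composition as routine. The only difference is cosmetic: you sketch the proof of the closed-range property via $C/(C\cap\ker f)$ and isometry of injective *-homomorphisms, whereas the paper simply cites it.
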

\begin{proof}
	Let $C\in\CC(A)$. Then the restriction of $f$ to $C$ is a *-homomorphism with codomain $B$. It follows from the First Isomorphism Theorem for C*-algebras (see for instance \cite[Theorem 3.1.6]{Murphy}) that $f[C]$ is a  C*-subalgebra of $B$. Since $f$ is multiplicative, it follows that $f[C]$ is commutative. Clearly $f[C]$ is a *-subalgebra of $B$, so $f[C]\in\CC(B)$. Moreover, we have $f[C]\subseteq f[D]$ if $C\subseteq D$, so $\CC(f)$ is an order morphism. If $f:A\to B$ and $g:B\to D$ are *-homomorphisms, then $\CC(g\circ f)(C)=g\circ f[C]=g[f[C]]=\CC(g)\circ\CC(f)$, and if $I_A:A\to A$ is the identity morphism, then $\CC(I_A)=1_{\CC(A)}$, the identity morphism of $\CC(A)$. Thus $\CC$ is indeed a functor.
\end{proof}

\begin{lemma}\label{lem:CAismeetsemilattice}
	Let $A$ be a C*-algebra. Then $\CC(A)$ has all non-empty meets, where the meet is given by the intersection operator. In particular, $\CC(A)$ is a meet-semilattice, and has a least element $$\C1_A=\{\lambda 1_A:\lambda\in\C\}.$$
\end{lemma}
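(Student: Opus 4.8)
The plan is to verify directly that set-theoretic intersection computes meets in $\CC(A)$, and that $\C 1_A$ is simultaneously an element of $\CC(A)$ and contained in every other element, which makes it the least element.

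First I would check that $\C 1_A=\{\lambda 1_A:\lambda\in\C\}$ genuinely lies in $\CC(A)$. It is closed under addition and scalar multiplication, under multiplication since $\lambda 1_A\cdot\mu 1_A=\lambda\mu 1_A$, and under the involution since $(\lambda 1_A)^*=\bar\lambda 1_A$, so it is a unital $*$-subalgebra; being one-dimensional it is automatically norm-closed, hence a C*-subalgebra, and it is trivially commutative and contains $1_A$. Since any $C\in\CC(A)$ contains $1_A$ and is closed under scalar multiplication, it contains every $\lambda 1_A$, whence $\C 1_A\subseteq C$. Thus, once $\C 1_A$ is known to be an element of $\CC(A)$, it is immediately the least element.

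Next I would treat an arbitrary non-empty family $\{C_i\}_{i\in I}$ in $\CC(A)$ and show that $D:=\bigcap_{i\in I}C_i$ is its meet. The key point is that $D$ again belongs to $\CC(A)$: an intersection of $*$-subalgebras is a $*$-subalgebra, an intersection of norm-closed subsets is norm-closed, and $1_A\in D$ because $1_A$ lies in each $C_i$; commutativity is inherited because any two elements of $D$ lie in a common (indeed in every) $C_i$, which is commutative. Hence $D\in\CC(A)$. By construction $D\subseteq C_i$ for all $i$, so $D$ is a lower bound, and if $E\in\CC(A)$ satisfies $E\subseteq C_i$ for every $i$, then $E\subseteq\bigcap_{i\in I} C_i=D$; therefore $D$ is the greatest lower bound, and the meet is indeed the intersection.

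Finally, binary meets are the special case $I=\{1,2\}$, so $\CC(A)$ is a meet-semilattice, and the least element $\C 1_A$ has already been produced. I do not expect a serious obstacle here; the only thing demanding a little care is confirming that all the defining properties of a commutative unital C*-subalgebra—algebraic closure, norm-closedness, unitality, and commutativity—are stable under arbitrary intersection. It is worth remarking that one cannot strengthen ``non-empty meets'' to ``all meets'', since the empty meet would be a greatest element of $\CC(A)$, and for non-commutative $A$ there is in general no largest commutative C*-subalgebra.
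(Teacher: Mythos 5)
Your proof is correct, and it is exactly the routine verification the paper has in mind: the paper dismisses this lemma with the single word ``Elementary,'' and your argument (intersections of commutative unital C*-subalgebras are again such, hence compute meets, and $\C 1_A$ lies in every element) fills in precisely those details. Your closing remark that the empty meet fails in general is also accurate and consistent with Lemma \ref{lem:CAlatticewhenAcommutative}.
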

\begin{proof}
	Elementary.
\end{proof}

\begin{lemma}\cite[Proposition 14]{BH}\label{lem:CAlatticewhenAcommutative}
	Let $A$ be a C*-algebra. Then the following statements are equivalent.
	\begin{enumerate}
		\item[(i)] $A$ is commutative;
		\item[(ii)] $\CC(A)$ is bounded;
		\item[(iii)] $\CC(A)$ is a complete lattice.
	\end{enumerate}
\end{lemma}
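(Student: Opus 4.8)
The plan is to establish the three conditions cyclically as $(i)\Rightarrow(iii)\Rightarrow(ii)\Rightarrow(i)$. For $(i)\Rightarrow(iii)$, suppose $A$ is commutative. Then $A$ is itself a commutative unital C*-subalgebra containing $1_A$, so $A\in\CC(A)$, and it is trivially the greatest element. By Lemma~\ref{lem:CAismeetsemilattice} the poset $\CC(A)$ already has all \emph{non-empty} meets; the top element $A$ supplies the empty meet, so in fact every subset of $\CC(A)$ has an infimum. I would then invoke the standard order-theoretic principle that a poset in which every subset has a meet is automatically a complete lattice: the join of a family $\{C_i\}_{i\in I}$ is obtained as the meet of its set of upper bounds, which is non-empty because $A$ bounds everything from above. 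Hence $\CC(A)$ is a complete lattice.

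The implication $(iii)\Rightarrow(ii)$ is immediate, since a complete lattice possesses both a greatest and a least element and is therefore bounded. The substance of the statement is $(ii)\Rightarrow(i)$. Assume $\CC(A)$ is bounded; as $\CC(A)$ always has least element $\C1_A$ by Lemma~\ref{lem:CAismeetsemilattice}, boundedness amounts to the existence of a greatest element $M$, which is a commutative C*-subalgebra containing every member of $\CC(A)$. The key step is to witness each self-adjoint element inside such a member: for self-adjoint $a\in A$ the element $a$ is normal, so the C*-subalgebra $C^*(1_A,a)$ generated by $1_A$ and $a$ is commutative and thus lies in $\CC(A)$. Consequently $C^*(1_A,a)\subseteq M$, whence $a\in M$. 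Since every $a\in A$ decomposes as $a=\tfrac12(a+a^*)+i\,\tfrac{1}{2i}(a-a^*)$, a complex-linear combination of the self-adjoint elements $\tfrac12(a+a^*)$ and $\tfrac{1}{2i}(a-a^*)$, we conclude $A\subseteq M$, forcing $A=M$. As $M$ is commutative, so is $A$.

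I expect no serious obstacle. The only point demanding a little care is the order-theoretic passage in $(i)\Rightarrow(iii)$ from ``all non-empty meets together with a top'' to ``complete lattice'', which rests on the general fact that the existence of all infima already guarantees all suprema. The remaining C*-algebraic ingredients for $(ii)\Rightarrow(i)$—that the subalgebra generated by $1_A$ and a single self-adjoint element is commutative, and that the self-adjoint elements linearly span $A$—are entirely elementary.
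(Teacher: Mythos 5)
Your proof is correct and follows essentially the same route as the paper's: the paper's (terse) argument likewise rests on the observation that $A\in\CC(A)$ if and only if $A$ is commutative, together with the fact that a greatest element plus the non-empty meets of Lemma~\ref{lem:CAismeetsemilattice} yields a complete lattice. Your $(ii)\Rightarrow(i)$ step merely spells out the detail the paper leaves implicit, namely that a greatest element of $\CC(A)$ must contain every $C^*(1_A,a)$ with $a$ self-adjoint and hence equal $A$.
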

\begin{proof}
	This follows immediately from the observation that $A\in\CC(A)$ if and only if $A$ is commutative, and the fact that if $\CC(A)$ has a greatest element, it has all meets, so it must be a complete lattice.
\end{proof}

The next proposition is originally due to Spitters \cite{Spitters}. A similar statement for the functor $\V$ assigning to a von Neumann algebra $M$ the poset $\V(M)$ of its commutative von Neumann subalgebras can be found in \cite{DRSB}.
\begin{proposition}\label{prop:CAisdcpo}
	Let $A$ be a C*-algebra. Then $\CC(A)$ is a dcpo, where $\bigvee\D=\overline{\bigcup\D}$ for each directed $\D\subseteq\CC(A)$, and where $\bigvee\D=\bigcup\D$ if $A$ is finite dimensional.
\end{proposition}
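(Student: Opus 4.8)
The plan is to prove that $\CC(A)$ is a dcpo by showing that every directed subset $\D\subseteq\CC(A)$ has a supremum given by $\overline{\bigcup\D}$, and then to verify that the closure is superfluous in the finite-dimensional case.

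First I would argue that for a directed family $\D$, the union $\bigcup\D$ is already a commutative $*$-subalgebra containing $1_A$, though possibly not closed. The key point is directedness: given $a,b\in\bigcup\D$, there exist $C_1,C_2\in\D$ with $a\in C_1$, $b\in C_2$, and by directedness some $C_3\in\D$ with $C_1,C_2\subseteq C_3$; hence $a,b\in C_3$, so any algebraic combination $\lambda a+\mu b$, $ab$, $a^*$ lies in $C_3\subseteq\bigcup\D$, and since $C_3$ is commutative, $ab=ba$. Thus $\bigcup\D$ is a (not necessarily closed) commutative unital $*$-subalgebra of $A$. Taking its norm-closure $\overline{\bigcup\D}$ yields a genuine element of $\CC(A)$: the closure of a commutative $*$-subalgebra is again a commutative $*$-subalgebra (multiplication, involution, and the commutation relation $ab=ba$ are all preserved under norm-limits by continuity of these operations), it is by construction norm-closed, and it still contains $1_A$.

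Next I would check that $\overline{\bigcup\D}$ is the least upper bound. It is clearly an upper bound, since each $C\in\D$ satisfies $C\subseteq\bigcup\D\subseteq\overline{\bigcup\D}$. For minimality, suppose $B\in\CC(A)$ is any upper bound, so $C\subseteq B$ for all $C\in\D$; then $\bigcup\D\subseteq B$, and since $B$ is itself norm-closed, $\overline{\bigcup\D}\subseteq\overline{B}=B$. Hence $\overline{\bigcup\D}=\bigvee\D$, establishing that $\CC(A)$ is a dcpo.

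Finally, for the finite-dimensional refinement I would observe that when $A$ is finite dimensional, every $C*$-subalgebra is finite dimensional and hence automatically norm-closed, so it suffices to show that $\bigcup\D$ is already closed, i.e.\ that the ascending chain of subalgebras stabilizes. Because $\dim C\leq\dim A<\infty$ for every $C\in\D$, the dimensions of members of $\D$ are bounded; choosing $C_0\in\D$ of maximal dimension and using directedness, any $C\in\D$ sits inside some $C'\in\D$ containing $C_0$, forcing $C'=C_0$ by maximality of dimension and hence $C\subseteq C_0$. Thus $C_0$ is the greatest element of $\D$, so $\bigcup\D=C_0$ is already a closed subalgebra and $\bigvee\D=\bigcup\D$. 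The only mildly delicate point in the whole argument is the verification that norm-closure preserves commutativity, but this is immediate from the joint continuity of multiplication; the directedness argument is the real engine and presents no genuine obstacle.
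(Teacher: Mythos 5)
Your proof is correct, and for the main assertion it follows essentially the same route as the paper: directedness makes $\bigcup\D$ a commutative unital *-subalgebra, its norm-closure is again in $\CC(A)$, and that closure is contained in every closed upper bound, hence is the join. The one place you genuinely diverge is the finite-dimensional refinement. The paper disposes of it in a single line: in a finite-dimensional C*-algebra every linear subspace is automatically closed, so $\overline{\bigcup\D}=\bigcup\D$ with no further argument. You instead prove the stronger statement that $\D$ has a greatest element: picking $C_0\in\D$ of maximal dimension, directedness puts any $C\in\D$ together with $C_0$ inside some $C'\in\D$, and $\dim C'=\dim C_0$ forces $C'=C_0$, hence $C\subseteq C_0$ and $\bigcup\D=C_0$. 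Both arguments are valid; the paper's is shorter, while yours yields the sharper conclusion that in the finite-dimensional case every directed family attains its supremum as one of its own members, which anticipates the Noetherian property of $\CC(A)$ established later in the paper (Proposition \ref{prop:ArtinianandNoetherianisfinitedimensional}). One small slip in wording: you speak of showing that ``the ascending chain of subalgebras stabilizes,'' but $\D$ is merely directed, not a chain; your actual argument via maximal dimension handles general directed sets correctly, so only the phrasing needs adjusting.
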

\begin{proof}
	Let $\D\subseteq\CC(A)$ be a directed subset. Let $S=\bigcup\D$. We show that $S$ is a commutative *-algebra. Let $x,y\in S$ and $\lambda,\mu\in\C$, there are $D_1,D_2\in\D$ such that $x\in D_1$ and $y\in D_2$. Since $\D$ is directed, there is some $D_3\in\D$ such that $D_1,D_2\subseteq D_3$. Hence $x,y\in D_3$, whence $\lambda x+\mu y,x^*,xy\in D_3$, and since $D_3$ is commutative, we obtain $xy=yx$. Since $D_3\subseteq S$, it follows that $S$ is a commutative *-subalgebra of $A$. Now, $\overline{S}$ is a commutative C*-subalgebra of $A$, which is the smallest commutative C*-subalgebra of $A$ containing every element of $\D$, hence it is the join of $\D$. If $A$ is finite dimensional, all subspaces of $A$ are closed, hence $\bigvee\D=\overline{\bigcup\D}=\bigcup\D$.
\end{proof}

If $A$ is not finite dimensional, we do not necessarily have the equality $\bigvee\D=\bigcup\D$ for each directed $\D\subseteq\CC(A)$. For instance, let $A=C([0,1])$ and $$\D=\{C_{[0,1/n]}:n\in\N\}.$$ Then $\bigcup\D$ consists of functions in $C([0,1])$ that are all constant on some neighborhood of $0$. Hence $f\notin\bigcup\D$, where $f:[0,1]\to\C$ is defined by $f(x)=x$. However, one can easily show that $\bigcup\D$ is a unital *-subalgebra of $C([0,1])$ that separates all points of $[0,1]$, so $\overline{\bigcup\D}=C([0,1])$ by the Stone-Weierstrass Theorem. Thus $\bigcup\D\neq\bigvee\D$.

\begin{proposition}\label{prop:Coff}
	Let $f:A\to B$ be a *-homomorphism. Then
	\begin{enumerate}
		\item[(i)] If $\CC(f)$ is surjective, then $f$ is surjective;
		\item[(ii)] If $f$ is injective, then $\CC(f)$ is an order embedding such that $$\down\CC(f)[\CC(A)]=\CC(f)[\CC(A)]$$ and 
		\begin{equation}\label{eq:Cfpreservesintersections}
		\CC(f)\left(\bigcap_{i\in I}C_i\right)=\bigcap_{i\in I}\CC(f)(C_i)
		\end{equation}
		for each non-empty subset $\{C_i\}_{i\in I}\subseteq\CC(A)$. If $\{D_j\}_{j\in J}\subseteq\CC(A)$ is a subset such that $\bigvee_{j\in J}D_j$ exists, then $\bigvee_{j\in J}\CC(f)(D_j)$ exists and 
		\begin{equation}\label{eq:Cfpreservesjoins}
		\CC(f)\left(\bigvee_{j\in J}D_j\right)=\bigvee_{j\in J}\CC(f)(D_j).
		\end{equation}
		Moreover $\CC(f)$ has an upper adjoint $\CC(f)_*:\CC(B)\to\CC(A)$ given by $D\mapsto f^{-1}[D]$ such that $$\CC(f)_*\circ\CC(f)=1_{\CC(A)};$$
		\item[(iii)] If $f$ is a *-isomorphism, then $\CC(f)$ is an order isomorphism.
	\end{enumerate}
\end{proposition}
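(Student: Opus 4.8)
The plan is to treat the three parts in turn, with part (ii) carrying essentially all the content; parts (i) and (iii) are short.

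For (i), I would reduce surjectivity of $f$ to the fact that $\CC(B)$ is detected, as far as images are concerned, by its singly generated members. Assuming $\CC(f)$ surjective, take any self-adjoint $b\in B$ and let $D=C^*(b,1_B)\in\CC(B)$ be the commutative C*-subalgebra generated by $b$ and the unit. By hypothesis $D=f[C]$ for some $C\in\CC(A)$, so $b\in f[C]\subseteq f[A]$. Since every element of $B$ is a linear combination of self-adjoint elements and $f$ is linear, $f[A]=B$, i.e.\ $f$ is surjective.

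For (ii), the guiding remark is that injectivity makes the corestriction $f|_C\colon C\to f[C]$ a $*$-isomorphism for every $C\in\CC(A)$; equivalently $f^{-1}[f[C]]=C$. I would then argue as follows. First, $\CC(f)$ is an order embedding: monotonicity is functoriality (Lemma \ref{lem:CAisinvariantforA}), and conversely $f[C]\subseteq f[D]$ forces $C\subseteq D$, since each $c\in C$ has $f(c)=f(d)$ for some $d\in D$, whence $c=d$ by injectivity. Next I set up the adjunction, which is the engine for the remaining claims: I check that $f^{-1}[D]\in\CC(A)$ for $D\in\CC(B)$ (it is a norm-closed $*$-subalgebra by continuity of $f$, contains $1_A$, and is commutative precisely because injectivity lets one cancel $f$ in $f(aa')=f(a'a)$). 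The equivalence $f[C]\subseteq D\iff C\subseteq f^{-1}[D]$ is the standard image--preimage adjunction, valid for any map, so $\CC(f)$ is a lower adjoint with upper adjoint $\CC(f)_*(D)=f^{-1}[D]$, and $\CC(f)_*\circ\CC(f)=1_{\CC(A)}$ is exactly $f^{-1}[f[C]]=C$. Preservation of intersections \eqref{eq:Cfpreservesintersections} is then immediate in one direction, while for the other, $b\in\bigcap_i f[C_i]$ gives $b=f(c_i)$ with $c_i\in C_i$, and injectivity collapses all $c_i$ to a single $c\in\bigcap_i C_i$. Preservation of existing joins \eqref{eq:Cfpreservesjoins} I would deduce formally from the adjunction, using that a lower adjoint preserves every join that exists: if $s=\bigvee_j D_j$ then $\CC(f)(s)$ dominates each $\CC(f)(D_j)$, and any further upper bound $q\in\CC(B)$ satisfies $\CC(f)(D_j)\subseteq q$, i.e.\ $D_j\subseteq\CC(f)_*(q)$, so $s\subseteq\CC(f)_*(q)$ and hence $\CC(f)(s)\subseteq q$; thus $\CC(f)(s)$ is the join of the $\CC(f)(D_j)$. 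Finally the down-set property: if $D\subseteq f[C]$ then $D\subseteq f[A]$, and since $f[f^{-1}[D]]=D\cap f[A]=D$ we get $D=\CC(f)(f^{-1}[D])$, exhibiting $D$ in the image of $\CC(f)$.

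For (iii), I would simply invoke functoriality: if $f$ is a $*$-isomorphism then $f^{-1}$ is a unital $*$-homomorphism, and by Lemma \ref{lem:CAisinvariantforA} the map $\CC(f^{-1})$ is a two-sided inverse of $\CC(f)$ in $\mathbf{Poset}$, so $\CC(f)$ is an order isomorphism.

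The only genuinely non-formal input sits in part (ii): the commutativity of $f^{-1}[D]$ really uses injectivity, and this is exactly the step that would break down for a general $*$-homomorphism (a quotient can make $f^{-1}[D]$ non-commutative). Everything else is bookkeeping of the image--preimage adjunction together with the standard join-preservation property of lower adjoints, which I regard as the main, though mild, point to state cleanly.
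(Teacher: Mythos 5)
Your proposal is correct and follows essentially the same route as the paper's own proof: part (i) via singly generated subalgebras $C^*(b_i,1_B)$ and decomposition into self-adjoint parts, part (ii) via the image--preimage adjunction with commutativity of $f^{-1}[D]$ extracted from injectivity, the collapse of preimages for intersections, the formal lower-adjoint argument for joins, and $f[f^{-1}[D]]=D\cap f[A]$ for the down-set property, and part (iii) by functoriality. The only differences are cosmetic reorganizations (e.g.\ proving the order embedding directly rather than through $f^{-1}[f[C]]=C$), so there is nothing to add.
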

\begin{proof}\
	\begin{enumerate}
		\item[(i)] Assume that $\CC(f)$ is surjective. Let $b\in B$, and let $b_1=\frac{b+b^*}{2}$ and $b_2=\frac{b-b^*}{2i}$. Then $b=b_1+ib_2$, and $b_1$ and $b_2$ are self-adjoint elements of $B$. It follows that $C^*(b_i,1_B)\in\CC(B)$ for each $i=1,2$, hence by the surjectivity of $\CC(f)$, there are $C_1,C_2\in\CC(A)$ such that $\CC(f)(C_i)=C^*(b_i,1_B)$. Since $\CC(f)(C_i)=f[C_i]$, this means that there are $a_1\in C_1$ and $a_2\in C_2$ such that $f(a_i)=b_i$. Let $a=a_1+i a_2$. Then $f(a)=b$, hence $f$ is surjective.
		
		\item[(ii)] Assume that $f$ is injective. We first show that $\CC(f)$ has an upper adjoint $\CC(f)_*$. Let $D\in\CC(B)$ and $x,y\in f^{-1}[D]$. Then $f(x),f(y)\in D$, so $$f(xy-yx)=f(x)f(y)-f(y)f(x)=0.$$ By the injectivity of $f$ it follows that $xy=yx$, so $f^{-1}[D]$ is a commutative *-subalgebra of $A$, which is closed since $f$ is continuous and $D$ is closed. Moreover, since $f(1_A)=1_B$, and $1_B\in D$, it follows that $1_A\in f^{-1}[D]$. So $D\mapsto f^{-1}[D]$ is a well-defined map $\CC(B)\to\CC(A)$. Moreover, this map is clearly inclusion preserving, and since $f[C]\subseteq D$ if and only if $C\subseteq f^{-1}[D]$ for any two subsets $C$ and $D$ of $A$ and $B$, respectively, we find that $D\mapsto f^{-1}[D]$ is indeed the upper adjoint of $\CC(f)$.
		
		Let $\{C_i\}_{i\in I}$ be a non-empty collection of elements of $\CC(A)$. We always have $f[\bigcap_{i\in I}C_i]\subseteq\bigcap_{i\in I}f[C_i]$. Now, let $x\in\bigcap_{i\in I}f[C_i]$. Then for each $i\in I$ there is an $c_i\in C_i$ such that $x=f(c_i)$. Hence for each $i,j\in I$, we have $f(c_i)=f(c_j)$. By injectivity of $f$ it follows that $c_i=c_j$, so $x=f(c)$ with $c\in\bigcap_{i\in I} C_i$ equal to $c_i$ for each $i\in I$. We conclude that $f[\bigcap_{i\in I}C_i]=\bigcap_{i\in I}f[C_i]$, which is exactly (\ref{eq:Cfpreservesintersections}). 
		
		Let $\{D_j\}_{j\in J}$ be a collection of elements of $\CC(A)$ such that $\bigvee_{j\in J}D_j$ exists. Then $\CC(f)(D_k)\leq\CC(f)\left(\bigvee_{j\in J}D_j\right)$ for each $k\in J$. Let $E\in\CC(B)$ such that $\CC(f)(D_k)\leq E$ for each $k\in J$. Since $\CC(f)$ has an upper adjoint $\CC(f)_*$, we find that $D_k\leq\CC(f)_*(E)$ for each $k\in J$, so $\bigvee_{j\in J}D_j\leq\CC(f)_*(E)$. Again using the adjunction, we find $\CC(f)\left(\bigvee_{j\in J}D_j\right)\leq E$. We conclude that $\CC(f)\left(\bigvee_{j\in J}D_j\right)$ is the join of $\{\CC(f)(D_j):j\in J\}$, and it follows automatically that (\ref{eq:Cfpreservesjoins}) holds.
		
		By injectivity of $f$, we find $$\CC(f)_*\circ\CC(f)(C)=f^{-1}[f[C]]=C$$ for each $C\in\CC(A)$, hence $\CC(f)_*\circ\CC(f)=1_{\CC(A)}$. 
		
		Let $C\in\down\CC(f)[\CC(A)]$. Hence there is some $D\in\CC(A)$ such that $C\subseteq\CC(f)(D)$. Then $$\CC(f)_*(C)\subseteq \CC(f)_*\circ\CC(f)(D)=D.$$ Now, $$\CC(f)\circ\CC(f)_*(C)=f[f^{-1}[C]]=C\cap f[A]=C,$$ since $C\subseteq\CC(f)(D)=f[D]\subseteq f[A]$. Thus $C=\CC(f)(E)$ where $E=\CC(f)_*(C)$, hence $C\in\CC(f)[\CC(A)]$. We conclude that $$\down\CC(f)[\CC(A)]=\CC(f)[\CC(A)].$$ 
		
		Finally, we show that $\CC(f)$ is an order embedding. Let $C_1$ and $C_2$ elements of $\CC(A)$. Since $\CC(f)$ is an order morphism, we have $\CC(f)(C_1)\subseteq\CC(f)(C_2)$ if $C_1\subseteq C_2$. Assume now that $\CC(f)(C_1)\subseteq\CC(f)(C_2)$. In other words, we have $f[C_1]\subseteq f[C_2]$. By the injectivity of $f$, we have $f^{-1}[f[C]]=C$ for each $C\in\CC(A)$. Since $f^{-1}$ preserves inclusions, this implies $C_1\subseteq C_2$. Hence we obtain $\CC(f)(C_1)\subseteq\CC(f)(C_2)$ if and only if $C_1\subseteq C_2$. In other words, $\CC(f)$ is an order embedding.

		\item[(iii)] This follows directly from the functoriality of $\CC$ and the fact that $f$ has an inverse.
	\end{enumerate}
\end{proof}

\section{Finite-dimensional C*-algebras}

In this section we shall prove the following theorem.

\begin{theorem}\label{thm:fdmaintheorem}
	Let $A$ be a finite-dimensional C*-algebra and $B$ any C*-algebra such that $\CC(A)\cong\CC(B)$. Then $A\cong B$.
\end{theorem}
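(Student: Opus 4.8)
The plan is to invoke the Artin--Wedderburn theorem, which furnishes a *-isomorphism $A\cong\bigoplus_{i=1}^k M_{n_i}(\C)$, and then to recover the multiset $\{n_1,\dots,n_k\}$ purely from the order structure of $\CC(A)$. Since a finite-dimensional C*-algebra is determined up to *-isomorphism by this multiset, it suffices to produce order-theoretic invariants of $\CC(A)$ that return the number of blocks $k$ and the sizes $n_i$; feeding an order isomorphism $\psi\colon\CC(A)\to\CC(B)$ through these invariants will then force $A$ and $B$ to share the same Artin--Wedderburn data, whence $A\cong B$.

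First I would show that $B$ is again finite-dimensional by characterising finite-dimensionality through the absence of infinite chains in $\CC$. If $A$ is finite-dimensional, every commutative C*-subalgebra is finite-dimensional of dimension at most $\dim A$, and dimension strictly increases along a proper inclusion, so $\CC(A)$ has no infinite chains (equivalently, it satisfies both the ascending and descending chain conditions). Conversely, if $B$ is infinite-dimensional it contains an infinite-dimensional commutative C*-subalgebra $D\cong C(X)$ with $X$ infinite; choosing distinct points $x_0,x_1,\dots\in X$ and using Urysohn's lemma, the algebras $C_{\{x_0,\dots,x_n\}}$ form a strictly descending infinite chain in $\CC(D)\subseteq\CC(B)$, violating the chain condition. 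As $\psi$ is an order isomorphism, $\CC(B)\cong\CC(A)$ inherits the absence of infinite chains, so $B$ is finite-dimensional and $B\cong\bigoplus_{j=1}^l M_{m_j}(\C)$.

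Next I would locate the centre order-theoretically. The maximal elements of $\CC(A)$ are exactly the maximal abelian C*-subalgebras, each of which contains $Z(A)$ since $Z(A)$ commutes with everything; conversely, as the self-adjoint elements span $A$ and each lies in some MASA, any element belonging to every MASA commutes with all of $A$. Hence $Z(A)=\bigcap\{M:M\text{ maximal}\}$, which by Lemma \ref{lem:CAismeetsemilattice} is the meet of the maximal elements, and an order isomorphism preserves maximal elements and existing meets, so $\psi(Z(A))=Z(B)$. Now I read off dimensions from two intervals. The interval $[\C1_A,Z(A)]$ coincides with $\CC(Z(A))$, and since $Z(A)\cong\C^k$ this is the lattice of partitions of a $k$-element set, whose maximal chains have length $k-1$; this recovers $k$. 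Fixing a maximal $M\cong\C^{N}$ with $N=\sum_i n_i$, the interval $[Z(A),M]$ is the lattice of partitions of the $N$ minimal projections of $M$ refining the $k$-block partition determined by $Z(A)$, which factors as the product $\prod_{i=1}^k\Pi_{n_i}$ of partition lattices. Invoking uniqueness of the decomposition of a finite lattice into directly indecomposable factors, each nontrivial factor $\Pi_{n_i}$ ($n_i\geq2$) is directly indecomposable with maximal chains of length $n_i-1$, so these chain lengths recover $\{n_i:n_i\geq2\}$, and the remaining $k-|\{i:n_i\geq2\}|$ blocks have size $1$. Thus $\psi$ forces $A$ and $B$ to have identical Artin--Wedderburn data.

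I expect two steps to carry the real weight. The first is the converse chain-condition direction, producing an infinite chain in $\CC(B)$ from infinite-dimensionality, since this is where genuine C*-algebraic input (the existence of an infinite-dimensional commutative subalgebra) enters. The second, and in my view the principal obstacle, is the lattice-theoretic core of the last paragraph: establishing that $[Z(A),M]$ really is the stated product of partition lattices, that each partition lattice is directly indecomposable, and above all that the decomposition into directly indecomposable factors is unique, so that the recovered dimensions are a well-defined order invariant rather than an artifact of a chosen factorisation.
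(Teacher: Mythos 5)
Your proposal is correct and follows essentially the same route as the paper: chain conditions to force $B$ to be finite dimensional (with the same Urysohn-based construction and the same Kadison--Ringrose input about masas), $Z(A)$ identified as the meet of the maximal elements, factorization of the interval $[Z(A),M]$ into directly indecomposable lattices, uniqueness of that factorization (Hashimoto's theorem), and chain-length data to read off the Artin--Wedderburn invariants. The only cosmetic differences are that the paper phrases the factors as $\CC(\C^{n_i})$ rather than as partition lattices and recovers $k$ and the $n_i$ via the dimension rank function (graded-poset machinery) instead of via maximal chain lengths in $[\C 1_A,Z(A)]$ and in the indecomposable factors.
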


The proof of the theorem relies on the fact that every finite-dimensional C*-algebra is *-isomorphic to a finite sum of matrix algebras.
\begin{theorem}[Artin-Wedderburn]\index{Artin-Wedderburn Theorem}
	Let $A$ be a finite-dimensional C*-algebra. Then there are $k,n_1,\ldots,n_k\in\N$ such that $$A\cong\bigoplus_{i=1}^k\MM_{n_i}(\C).$$
	The number $k$ is unique, whereas the numbers $n_1,\ldots,n_k$ are unique up to permutation.
\end{theorem}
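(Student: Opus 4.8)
The plan is to reduce the statement to the simple case by splitting off the center, then to build an explicit system of matrix units in the simple case, and finally to read off uniqueness from the invariance of the center. First I would observe that the center $Z(A)$ is a finite-dimensional commutative C*-algebra, hence by Gel'fand duality *-isomorphic to $C(X)$ for a finite discrete space $X$; that is, $Z(A)\cong\C^k$ with $k=|X|$. Let $z_1,\ldots,z_k$ be the minimal projections of $Z(A)$ (the indicator functions of the points of $X$). They are mutually orthogonal central projections summing to $1_A$, so they induce a direct-sum decomposition $A=\bigoplus_{i=1}^k z_iA$ into two-sided ideals $A_i:=z_iA=z_iAz_i$, each a finite-dimensional C*-algebra with unit $z_i$ and center $z_iZ(A)=\C z_i$ by minimality of $z_i$. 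Thus it suffices to prove that a finite-dimensional C*-algebra $A$ with $Z(A)=\C1_A$ is *-isomorphic to $\MM_n(\C)$ for some $n\in\N$.

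The crux is the simple case. Because $A$ is finite dimensional, every nonzero projection dominates a \emph{minimal} one, since a strictly descending chain of projections would force the linear dimensions of the corresponding corners to strictly decrease. For a minimal projection $e$, the corner $eAe$ is a C*-algebra with unit $e$ admitting no projection strictly between $0$ and $e$; spectral theory then forces every self-adjoint element of $eAe$ to be a scalar multiple of $e$, so $eAe=\C e$. Writing $1_A$ as a sum of mutually orthogonal minimal projections $e_1,\ldots,e_n$, the main task is to show that these are all Murray--von Neumann equivalent. This is exactly where the trivial center is used: for minimal $e_i,e_j$ the space $e_iAe_j$ is nonzero, for otherwise the two-sided ideal generated by $e_i$ would be a proper nonzero ideal, contradicting simplicity of $A$ (a finite-dimensional C*-algebra with $Z(A)=\C1_A$ is simple). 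Taking a nonzero $x\in e_iAe_j$, its polar decomposition inside $A$ yields a partial isometry $u$ with $u^*u=e_j$ and $uu^*=e_i$ by minimality. Setting $e_{ij}:=u$ and $e_{ii}:=e_i$ produces a full system of matrix units, and the linear extension of $e_{ij}\mapsto E_{ij}$ to the standard matrix units gives the desired *-isomorphism $A\cong\MM_n(\C)$.

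I expect this second step to be the main obstacle. The delicate points are verifying that the partial isometry furnished by polar decomposition actually lies in $A$, that $e_iAe_j\neq 0$ follows cleanly from simplicity, and that the assignment $e_{ij}\mapsto E_{ij}$ both respects the multiplication and adjoint relations $e_{ij}e_{kl}=\delta_{jk}e_{il}$, $e_{ij}^*=e_{ji}$ and is surjective onto $\MM_n(\C)$; one must also check that the $e_{ij}$ span all of $A$, which follows from $A=\bigoplus_{i,j}e_iAe_j$ together with $e_iAe_j=\C e_{ij}$.

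For uniqueness, the integer $k$ is forced because, for $A=\bigoplus_{i=1}^k\MM_{n_i}(\C)$, one computes $Z(A)=\bigoplus_{i=1}^k\C\cdot1_{\MM_{n_i}(\C)}\cong\C^k$, and the center is preserved by every *-isomorphism, so $k=\dim_\C Z(A)$ is an invariant. For the numbers $n_i$, note that the minimal central projections of $A$ are precisely the block units $z_i$, and $z_iA\cong\MM_{n_i}(\C)$ has complex dimension $n_i^2$. A *-isomorphism carries minimal central projections bijectively to minimal central projections and restricts to *-isomorphisms of the corresponding simple summands; since $\MM_n(\C)\cong\MM_m(\C)$ forces $n^2=m^2$ and hence $n=m$, the multiset $\{n_1,\ldots,n_k\}$ is determined up to permutation.
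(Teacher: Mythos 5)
The paper does not actually prove this theorem: its ``proof'' is a citation to Takesaki, Theorem I.11.2. What you have written is the standard structure-theoretic argument that such a reference contains, and its skeleton is correct: decompose $A$ over the minimal projections $z_1,\ldots,z_k$ of the finite-dimensional commutative C*-algebra $Z(A)$, reduce to the case $Z(A)=\C 1_A$, build matrix units there, and recover uniqueness from the center and the block dimensions. Two of the points you flag as delicate are in fact easy here, and you should say so explicitly. First, your parenthetical claim that trivial center implies simple follows from the standard fact that any two-sided ideal $I$ of a finite-dimensional C*-algebra is automatically closed and *-closed, hence has a unit $p$, and $p$ is then a central projection (for $a\in A$, both $ap$ and $pa$ lie in $I$, so $ap=p(ap)=pap=(pa)p=pa$). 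Second, no polar-decomposition machinery is needed: for $0\neq x\in e_iAe_j$ one has $x^*x\in e_jAe_j=\C e_j$, say $x^*x=\lambda e_j$ with $\lambda=\|x\|^2>0$, so $u:=\lambda^{-1/2}x$ satisfies $u^*u=e_j$, and $uu^*$ is a nonzero projection in $e_iAe_i=\C e_i$, hence equals $e_i$.

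The one genuine flaw is in the construction of the matrix units: you cannot take $e_{ij}$ to be an \emph{independently chosen} partial isometry for each pair $(i,j)$. Both $e_{ij}e_{jk}$ and $e_{ik}$ are then nonzero elements of the one-dimensional space $e_iAe_k$, so they agree only up to a unimodular scalar, and the relation $e_{ij}e_{kl}=\delta_{jk}e_{il}$ can genuinely fail. The standard repair is to choose only the family $e_{i1}$ (a partial isometry with $e_{i1}^*e_{i1}=e_1$ and $e_{i1}e_{i1}^*=e_i$, taking $e_{11}=e_1$), set $e_{1i}:=e_{i1}^*$, and \emph{define} $e_{ij}:=e_{i1}e_{1j}$; the relations then follow from $e_{1i}e_{i1}=e_1$ and $e_{1j}e_{k1}=e_{1j}e_je_ke_{k1}=0$ for $j\neq k$. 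With that normalization, together with $e_iAe_j=\C e_{ij}$ and $A=\bigoplus_{i,j}e_iAe_j$, your map onto the standard matrix units of $\MM_n(\C)$ is a *-isomorphism, and your uniqueness argument via $k=\dim Z(A)$ and $\dim z_iA=n_i^2$ goes through as written.
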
 
\begin{proof}
	\cite[Theorem I.11.2]{Takesaki1}
\end{proof}

The strategy for proving Theorem \ref{thm:fdmaintheorem} is the following. First, we find an order-theoretic property of $\CC(A)$ that corresponds with the finite-dimensionality of $A$. Then we find a method of retrieving the numbers $k,n_1,\ldots,n_k$ from $\CC(A)$.

\begin{definition}
	Let $\CC$ be a poset. Then $\CC$ is called \emph{Artinian}\index{Artinian poset} if it satisfies one of the following equivalent conditions:
	\begin{enumerate}
		\item[1)]  Every non-empty subset of $\CC$ contains a minimal element;
		\item[2)] All non-empty filtered subsets of $\CC$ have a least element;
		\item[3)] $\CC$ satisfies the \emph{descending chain condition}\index{descending chain condition}: if we have a sequence of
		elements $C_1\geq C_2\geq \ldots$ in $\CC$, i.e., a countable descending chain, then the sequence stabilizes, i.e., there is an
		$n\in\N$ such that $C_k=C_{n}$ for all $k>n$.
	\end{enumerate}
\end{definition}
In order to see that these conditions are equivalent, assume that (1) holds and let $\F$ a non-empty filtered subset of $\CC$. Then $\F$ must have a minimal element $C$. Now, if $F\in\F$, then there must be an $G\in\F$ such that $G\leq C,F$. Since $C$ is minimal, it follows that $G=C$, so $C\leq F$, whence $C$ is the least element of $\F$. 

For (2) implies (3), let $C_1\geq C_2\geq C_3\geq \ldots$ be a descending chain. Then $\F=\{C_i\}_{i\in\N}$ is clearly a directed subset, so it has a least element, say $C_n$. So we must have $C_k=C_n$ for all $k>n$, hence $\CC$ satisfies (3).

Finally, we show by contraposition that (3) follows from (1). So assume that $\CC$ does not satisfy the descending chain
condition. Hence we can construct a sequence $C_1\geq C_2\geq \ldots$ that
does not terminate. The set \mbox{$\F=\{C_n:n\in\N\}$} is then a non-empty
subset of $\CC$ without a minimal element. Thus $\CC$ does not satisfy (1).

There exists also a notion dual to the notion of an Artinian poset.
\begin{definition}
	Let $\CC$ be a poset. Then $\CC$ is called \emph{Noetherian} if is satisfies one of the following equivalent conditions:
	\begin{enumerate}
		\item[1)] Every non-empty subset contains a maximal element;
		\item[2)] All non-empty directed subsets of $\CC$ have a greatest element;
		\item[3)] $\CC$ satisfies the \emph{ascending chain condition}\index{ascending chain condition}: if we have a sequence of
		elements $C_1\leq C_2\leq \ldots$ in $\CC$, i.e., a countable ascending chain, then the sequence stabilizes, i.e., there is an
		$n\in\N$ such that $C_k=C_{n}$ for all $k>n$.
	\end{enumerate}
\end{definition}

The following proposition can be found in \cite{Just} as Theorem 4.21.
\begin{proposition}[Principle of Artinian induction]\index{Artinian induction}\label{prop:ArtinianInduction}
	Let $\CC$ be an Artinian poset and $\PP$ a property such that:
		\begin{enumerate}
			\item $\PP(C)$ is true for each minimal $C\in \CC$ \emph{(induction basis)};
			\item $\PP(B)$ is true for all $B<C$ implies that $\PP(C)$ is true \emph{(induction step)}.
		\end{enumerate}
	Then $\PP(C)$ is true for each $C\in \CC$.
\end{proposition}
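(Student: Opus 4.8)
The plan is to argue by contradiction, exploiting characterization (1) in the definition of an Artinian poset: every non-empty subset of $\CC$ contains a minimal element. I would set
\[
S=\{C\in\CC:\PP(C)\text{ is false}\}
\]
and suppose, toward a contradiction, that $S\neq\emptyset$. Since $\CC$ is Artinian and $S$ is a non-empty subset, $S$ then has a minimal element $C_0$.

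The key observation is that, by minimality of $C_0$ \emph{within $S$}, no element strictly below $C_0$ can belong to $S$; in other words, $\PP(B)$ holds for every $B<C_0$ in $\CC$. From here I would distinguish two cases according to whether $C_0$ is a minimal element of the whole poset $\CC$. If $C_0$ is minimal in $\CC$, then the induction basis~(1) gives directly that $\PP(C_0)$ is true, contradicting $C_0\in S$. If $C_0$ is not minimal in $\CC$, then there exist elements $B<C_0$, and since $\PP(B)$ holds for all such $B$, the induction step~(2) yields that $\PP(C_0)$ is true, again contradicting $C_0\in S$. In either case we reach a contradiction, so $S=\emptyset$, which is exactly the assertion that $\PP(C)$ holds for every $C\in\CC$.

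I expect no genuine obstacle here, as the argument is short and purely order-theoretic. The only point requiring a little care is the case split on whether $C_0$ is minimal in $\CC$, which is precisely what forces us to invoke the induction basis separately from the induction step: had one adopted the convention that the step is applied vacuously when there are no $B<C_0$, the basis would be subsumed by the step, but the formulation above keeps the two clauses distinct, matching the statement of the proposition.
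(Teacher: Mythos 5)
Your proof is correct and follows essentially the same argument as the paper: consider the set of elements where $\PP$ fails, take a minimal element of it using the Artinian property, and derive a contradiction. The only (cosmetic) difference is your explicit case split on whether $C_0$ is minimal in $\CC$; the paper skips this by applying the induction step vacuously in that case, exactly as you anticipate in your closing remark.
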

\begin{proof}
	Assume that $\F=\{C\in \CC:\PP(C)$ is not true$\}$ is non-empty. Since
	$\CC$ is Artinian, this means that $\F$ has a minimal element $C$.
	Hence $\PP(B)$ is true for all elements $B<C$, so $\PP(C)$ is true by
	the induction step, contradicting the definition of $\F$.
\end{proof}

\begin{definition}\label{def:gradedposet}
	Let $\CC$ be a poset. Then $\CC$ is called \emph{graded}\index{graded poset} if one can define a function $d:\CC\to\N$, called a \emph{rank function}\index{rank function} such that: 
	\begin{enumerate}
		\item[(i)] $d(C)=1$ for each $C\in\min\CC$;
		\item[(ii)] $d(C_1)<d(C_2)$ for each $C_1,C_2\in\CC$ such that $C_1<C_2$;
		\item[(iii)] $C_2$ is a cover of $C_1$ if and only if $C_1\leq C_2$ and $d(C_2)=d(C_1)+1$ for each $C_1,C_2\in\CC$.
	\end{enumerate}
\end{definition}
There is no standard definition of a graded poset. For instance, in \cite{Roman} condition (i) is dropped and $\Z$ is taken as codomain of rank functions. On the other hand, \cite{KRY} assumes condition (i), but not condition (ii). For our purposes, it is convenient to combine both definitions. The next three lemmas are now easy to prove.

\begin{lemma}\label{lem:gradedimpliesartinian}
	Let $\CC$ be a graded poset with rank funcion $d:\CC\to\N$. Then $\CC$ is Artinian. If the range of $d$ is bounded from above, $\CC$ is Noetherian as well.
\end{lemma}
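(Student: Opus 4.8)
The plan is to transport chains in $\CC$ to monotone sequences of natural numbers via the rank function $d$, and then to exploit the fact that $\N$ itself is well-ordered. Notice that only condition (ii) from the definition of gradedness will be needed; conditions (i) and (iii) play no role here. The essential tool is a contrapositive reading of (ii): if $C_1\leq C_2$ and $d(C_1)=d(C_2)$, then necessarily $C_1=C_2$, since otherwise $C_1<C_2$ would force $d(C_1)<d(C_2)$.

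For the Artinian claim I would verify condition (3) of the definition, the descending chain condition. Given a descending chain $C_1\geq C_2\geq\cdots$ in $\CC$, applying $d$ and using (ii) yields a weakly descending sequence $d(C_1)\geq d(C_2)\geq\cdots$ of natural numbers. Since $\N$ is well-ordered, this sequence stabilizes: there is an $n$ with $d(C_k)=d(C_n)$ for all $k\geq n$. The observation above now applies: for each $k\geq n$ we have $C_k\leq C_n$ together with $d(C_k)=d(C_n)$, whence $C_k=C_n$. Thus the original chain stabilizes and $\CC$ is Artinian.

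For the Noetherian claim under the additional hypothesis that the range of $d$ is bounded above by some $N\in\N$, I would check condition (3) of the Noetherian definition, the ascending chain condition, by the mirror-image argument. An ascending chain $C_1\leq C_2\leq\cdots$ produces via $d$ a weakly ascending sequence $d(C_1)\leq d(C_2)\leq\cdots$ in $\N$, now bounded above by $N$. A monotone sequence of natural numbers bounded above stabilizes, and the same observation then forces the $C_k$ to stabilize. Here the boundedness hypothesis is genuinely needed: without it an ascending sequence in $\N$ such as $1,2,3,\dots$ need not stabilize, so the descending argument has no automatic dual.

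I do not anticipate any serious obstacle. The only point requiring care is the passage from stabilization in $\N$ back to stabilization in $\CC$, which is exactly where condition (ii), in its contrapositive form, is invoked; this is what prevents $d$ from collapsing distinct comparable elements onto a single value and thereby lets the stabilization downstairs be lifted upstairs.
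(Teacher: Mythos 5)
Your proof is correct, but it takes a different route from the paper's. You verify condition (3) in the definitions of Artinian and Noetherian (the chain conditions), pushing a chain down to a monotone sequence in $\N$ and lifting its stabilization back up via the contrapositive of condition (ii) of gradedness. The paper instead verifies condition (2): for a non-empty filtered subset $\F$ it takes the minimum $n$ of $d[\F]\subseteq\N$, uses filteredness together with strict monotonicity of $d$ to show that there is exactly one $F\in\F$ with $d(F)=n$, and then shows that this $F$ is the least element of $\F$; the Noetherian half is dual, taking a maximum of ranks on a directed subset, which exists because the range of $d$ is bounded above. Both arguments use only condition (ii) and the well-ordering of $\N$, so the kernel is the same. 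What each approach buys is this: yours is the more elementary, involving only countable chains and bounded monotone sequences of naturals; the paper's handles arbitrary filtered and directed subsets at once and so directly produces least elements (and essentially the same argument yields minimal elements of arbitrary non-empty subsets, i.e., condition (1)), which is the form of Artinian-ness the paper actually invokes later, in Artinian induction (Proposition \ref{prop:ArtinianInduction}) and in the uniqueness of rank functions (Lemma \ref{lem:uniquenessrankfunction}). Your version reaches those applications only through the asserted equivalence of (3) with (1); that implication is the one whose proof needs a dependent-choice construction of a descending chain from a subset with no minimal element (and it is, as it happens, the one implication the paper's discussion after the definition does not spell out), so while you are formally entitled to rely on it, the paper's choice of target condition makes its proof the more self-contained of the two.
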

\begin{proof}
	Let $\F\subseteq\CC$ a non-empty filtered subset. Then $d[\F]\subseteq\N$ is non-empty, so it contains a least element $n$. Let $F_1,F_2\in\F$ such that $d(F_1)=d(F_2)=n$. Since $\F$ is filtered, there is an $F\in\F$ such that $F\leq F_1,F_2$. Assume that $F\neq F_1$.  Then $F<F_1$, so $d(F)<d(F_1)=n$ contradicting the minimality of $n$. Hence we must have $F=F_1$ and in a similar way, we find that $F=F_2$. So there is a unique element $F\in\F$ such that $d(F)=n$. Let $F'\in\F$. Since $\F$ is filtered, there is some $F''\in\F$ such that $F''\leq F,F'$. Again if $F''<F$, we find $d(F'')<n$ contradicting the minimality of $n$, so $F''=F$. It follows that $F\leq F'$, so $F$ is the least element of $\F$. We conclude that $\CC$ is Artinian. Now assume that $d[\CC]\subseteq\N$ has an upper bound, then the proof that $\CC$ is Noetherian follows in an analogous way. 
\end{proof}

\begin{lemma}\label{lem:uniquenessrankfunction}
	Let $\CC$ be a graded poset. Then its rank function $d:\CC\to\N$ is unique. 
\end{lemma}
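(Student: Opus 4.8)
The plan is to prove uniqueness of the rank function by induction, exploiting the fact that a graded poset is Artinian (Lemma \ref{lem:gradedimpliesartinian}), so we may apply the Principle of Artinian Induction (Proposition \ref{prop:ArtinianInduction}). Suppose $d$ and $d'$ are two rank functions on $\CC$. I would set up the property $\PP(C):\ d(C)=d'(C)$ and verify that it holds on all of $\CC$.

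For the \emph{induction basis}, let $C\in\min\CC$. By condition (i) of Definition \ref{def:gradedposet}, both rank functions assign $d(C)=1=d'(C)$, so $\PP(C)$ holds for every minimal element.

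For the \emph{induction step}, assume $C\in\CC$ is not minimal and that $\PP(B)$ holds, i.e.\ $d(B)=d'(B)$, for every $B<C$. Since $C$ is not minimal and $\CC$ is Artinian, the set $\{B\in\CC:B<C\}$ is non-empty and has a minimal element; more to the point, $C$ covers at least one element $B_0$ (take any maximal element of $\{B:B<C\}$, which exists because the dual of an Artinian poset restricted to a set bounded above behaves well, or argue directly that a non-minimal element in an Artinian poset must cover something). For such a cover $B_0<C$, condition (iii) gives $d(C)=d(B_0)+1$ and likewise $d'(C)=d'(B_0)+1$. By the induction hypothesis $d(B_0)=d'(B_0)$, and hence $d(C)=d'(C)$, so $\PP(C)$ holds. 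By Proposition \ref{prop:ArtinianInduction} we conclude $d=d'$.

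The main obstacle is the step asserting that a non-minimal element $C$ in an Artinian poset necessarily covers some element $B_0$. One must be careful: it is not enough that $\{B:B<C\}$ is non-empty; I need a \emph{maximal} element of this set so that it is genuinely covered by $C$. The cleanest way is to observe that $\{B\in\CC:B<C\}$ is non-empty (as $C$ is not minimal) and then use condition (ii), which forces $d$ to strictly decrease along descending chains into values in $\N$; since $\N$ is well-ordered, any strictly descending chain below $C$ terminates, and a top element of $\{B:B<C\}$ is exactly a cover $B_0$ of $C$. Once this covering element is produced, the rest of the argument is routine from the defining conditions of a rank function.
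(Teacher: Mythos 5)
Your overall strategy --- Artinian induction, base case on $\min\CC$ via condition (i), induction step via a covered element and condition (iii) --- is exactly the paper's proof. The gap is in the one step you yourself flag as the main obstacle: producing an element $B_0$ that $C$ covers. Both of your justifications lean on the Artinian property, i.e.\ on termination of \emph{descending} chains, and that cannot work: descending chain conditions produce \emph{minimal} elements of non-empty subsets, never maximal ones, and it is simply false that a non-minimal element of an Artinian poset must cover something. The ordinal $\omega+1$, i.e.\ $\N\cup\{\infty\}$ with its usual order, is Artinian (indeed well-ordered), yet its top element covers nothing, because $\N$ has no greatest element. So the chain of reasoning ``any strictly descending chain below $C$ terminates, hence $\{B\in\CC:B<C\}$ has a top element'' is a non sequitur; what you actually need is that strictly \emph{ascending} chains in $\{B\in\CC:B<C\}$ terminate. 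Likewise, the parenthetical appeal to ``the dual of an Artinian poset'' does not help: the dual of an Artinian poset is Noetherian, but $\CC$ itself is only known to be Artinian, and restricting to a bounded-above subset does not change that.

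The existence of the cover must come from gradedness, not Artinianness, and this is precisely how the paper argues. By condition (ii) of Definition \ref{def:gradedposet}, the set $\{d(B):B<C\}\subseteq\N$ is non-empty (as $C$ is not minimal) and bounded above by $d(C)$, hence it has a maximum $n=d(B_0)$ attained by some $B_0<C$. If $C$ did not cover $B_0$, there would be $B'\in\CC$ with $B_0<B'<C$, whence $d(B')>n$ while $B'<C$, contradicting the maximality of $n$; so $C$ covers $B_0$. (The paper runs this contradiction through condition (iii), but the content is the same.) Once this is in place, your computation $d(C)=d(B_0)+1=d'(B_0)+1=d'(C)$, using condition (iii) for both rank functions together with the induction hypothesis, is exactly the paper's conclusion. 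So the repair is local: replace the appeal to well-ordering/descending chains by the observation that the rank function is bounded above on $\down C$ and take a maximizer.
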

\begin{proof}
By Lemma \ref{lem:gradedimpliesartinian}, $\CC$ is Artinian. Hence every non-empty subset has a minimal element, and in particular $\min\CC\neq\emptyset$. Assume that $g:\CC\to\N$ is a rank function. By definition of a rank function we have $g(M)=d(M)=1$ for each $M\in\min\CC$. 

Let $C\in\CC$ such that $C\notin\min\CC$. Assume that $d(B)=g(B)$ for each $B<C$. The set $\{d(B):B<C\}\subseteq\N$ is non-empty and bounded by $d(C)$, hence it must have a maximum $n$. As a consequence, there is some $B<C$ such that $d(B)=n$. Assume that $n+1\neq d(C)$. Then $C$ does not cover $B$, hence there is some $B'\in\CC$ such that $B<B'<C$. Thus $n=d(B)<d(B')$ contradicting the maximality of $n$. We conclude that $d(B)+1=d(C)$, so $C$ covers some $B$. Hence we obtain $$d(C)=d(B)+1=g(B)+1=g(C),$$ so $d=g$ by Artinian induction.  
\end{proof}

\begin{lemma}\label{lem:gradedposetunderorderiso}
	Let $\phi:\CC\to\D$ be an order isomorphism between graded posets $\CC$ and $\D$ with rank functions $d_\CC$ and $d_\D$, respectively. Then $d_\CC=d_\D\circ\phi$.
\end{lemma}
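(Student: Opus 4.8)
The plan is to prove that $d_\CC = d_\D \circ \phi$ by invoking the uniqueness of rank functions (Lemma~\ref{lem:uniquenessrankfunction}). The key observation is that $d_\D \circ \phi$ is itself a rank function on $\CC$; once we establish this, the desired equality follows immediately, since $\CC$ can carry only one rank function.

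\begin{proof}
	Set $g = d_\D \circ \phi : \CC \to \N$. We verify that $g$ satisfies the three conditions of Definition~\ref{def:gradedposet}, so that $g$ is a rank function on $\CC$. For condition (i), note that since $\phi$ is an order isomorphism, it restricts to a bijection $\min\CC \to \min\D$; hence for $C \in \min\CC$ we have $\phi(C) \in \min\D$, so $g(C) = d_\D(\phi(C)) = 1$. For condition (ii), suppose $C_1 < C_2$ in $\CC$. As $\phi$ is an order isomorphism, $\phi(C_1) < \phi(C_2)$ in $\D$, and therefore $g(C_1) = d_\D(\phi(C_1)) < d_\D(\phi(C_2)) = g(C_2)$. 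For condition (iii), observe that an order isomorphism preserves and reflects the covering relation: $C_2$ covers $C_1$ in $\CC$ if and only if $\phi(C_2)$ covers $\phi(C_1)$ in $\D$. Combining this with the covering characterization provided by the rank function $d_\D$, we find that $C_2$ covers $C_1$ if and only if $\phi(C_2)$ covers $\phi(C_1)$, which holds if and only if $\phi(C_1) \leq \phi(C_2)$ and $d_\D(\phi(C_2)) = d_\D(\phi(C_1)) + 1$, i.e., if and only if $C_1 \leq C_2$ and $g(C_2) = g(C_1) + 1$. Thus $g$ is a rank function on $\CC$. By Lemma~\ref{lem:uniquenessrankfunction}, the rank function on $\CC$ is unique, so $d_\CC = g = d_\D \circ \phi$.
\end{proof}

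The only genuinely delicate point is condition (iii), where one must be careful to use that an order isomorphism preserves the covering relation in both directions; the remaining conditions are routine consequences of $\phi$ being an order isomorphism. Since all three verifications are short, I do not expect any serious obstacle, and the whole argument reduces to the single conceptual step of recognizing $d_\D \circ \phi$ as a rank function and appealing to uniqueness.
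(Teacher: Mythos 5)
Your proposal is correct and follows essentially the same route as the paper's own proof: define $g = d_\D \circ \phi$, check the three conditions of Definition~\ref{def:gradedposet}, and conclude via the uniqueness of rank functions (Lemma~\ref{lem:uniquenessrankfunction}). Your write-up is in fact slightly more careful than the paper's, which states the covering-preservation step without comment on reflection of covers and contains minor notational slips ($d_2$, $d_1$); no changes are needed.
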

\begin{proof}
	Let $d=d_\D\circ\phi$. We check that $d$ is a rank function on $\CC$. Let $C\in\min\CC$, then $\phi(C)\in\min\D$, so $d(C)=d_\D\circ\phi(C)=1$.
	
	Let $C_1,C_2\in\CC$ such that $C_1<C_2$. Then $\phi(C_1)\leq\phi(C_2)$, and since $\phi$ is injective, we cannot have equality. Hence $\phi(C_1)<\phi(C_2)$, so $$d(C_1)=d_\D\circ\phi(C_1)<d_\D\circ\phi(C_2)=d(C_2).$$
	
	Finally, let $C_1,C_2\in\CC$. Clearly, $C_2$ covers $C_1$ if and only if $\phi(C_2)$ covers $\phi(C_1)$ if and only if $d_2(\phi(C_2))=d_2(\phi(C_1))+1$ and $C_1\leq C_2$ if and only if $d(C_2)=d(C_1)+1$ and $C_1\leq C_2$. 
	
	Thus $d$ is a rank function on $\CC$ and by Lemma \ref{lem:uniquenessrankfunction}, we obtain $d_1=d$.	
\end{proof}

\begin{lemma}\label{lem:rankfunctiononCA}
	Let $A$ be a finite-dimensional C*-algebra. Then $\CC(A)$ is graded with a rank function $\dim:\CC(A)\to\N$ assigning to each element $C\in\CC(A)$ its dimension. Moreover, the range of $\dim$ is bounded from above.
\end{lemma}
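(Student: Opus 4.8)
The plan is to verify directly that the function $\dim$ satisfies the three conditions of Definition \ref{def:gradedposet}, and then to bound its range. For condition (i), note that by Lemma \ref{lem:CAismeetsemilattice} the poset $\CC(A)$ has $\C1_A$ as its least element, so $\C1_A$ is the unique minimal element, and $\dim(\C1_A)=1$ since it is spanned by $1_A$. Condition (ii) is pure linear algebra: if $C_1<C_2$, then $C_1$ is a proper linear subspace of the finite-dimensional space $C_2$, so $\dim(C_1)<\dim(C_2)$.

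The content lies in condition (iii). One implication is immediate: if $C_1\leq C_2$ and $\dim(C_2)=\dim(C_1)+1$, then there can be no $C$ with $C_1\subsetneq C\subsetneq C_2$, since by (ii) such a $C$ would force $\dim(C_1)<\dim(C)<\dim(C_1)+1$, which is impossible over $\N$; thus $C_2$ covers $C_1$. For the converse I would argue by contraposition: assuming $C_2$ covers $C_1$ but $\dim(C_2)\geq\dim(C_1)+2$, I would produce a strictly intermediate element and derive a contradiction. Here I use that a finite-dimensional commutative C*-algebra $C_2$ is $*$-isomorphic to $\C^n$ with $n=\dim(C_2)$, i.e.\ $C_2=C(X)$ for a finite discrete space $X$ of $n$ points. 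By Lemmas \ref{lem:subalgofCXisquotientofX} and \ref{lem:subalgebraisintersectionofidealalgebras}, the subalgebra $C_1$ is determined by the equivalence relation $\sim_{C_1}$ on $X$, and $\dim(C_1)$ equals the number $m$ of its equivalence classes.

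The decisive step is then combinatorial. Since $\dim(C_1)<\dim(C_2)$ we have $m<n$, so some class of $\sim_{C_1}$ contains at least two points; splitting such a class into two nonempty pieces yields a finer equivalence relation with $m+1$ classes, and hence a subalgebra $C$ with $C_1\subsetneq C$. The assumption $\dim(C_2)\geq\dim(C_1)+2$ gives $m+1<n$, so $C$ is not all of $C_2$, i.e.\ $C\subsetneq C_2$. This contradicts the hypothesis that $C_2$ covers $C_1$, completing condition (iii). I expect this construction --- translating the covering relation through Gel'fand duality into a statement about refinements of partitions of $X$ --- to be the crux of the argument. Finally, the range of $\dim$ is bounded from above because every $C\in\CC(A)$ is a linear subspace of $A$, so $\dim(C)\leq\dim(A)<\infty$.
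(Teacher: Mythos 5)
Your proof is correct. There is nothing in the paper to compare it against: the author disposes of this lemma with the single word ``Trivial'', so what you have done is supply the verification that the paper leaves entirely implicit. Conditions (i) and (ii) and the bound $\dim(C)\leq\dim(A)$ are handled exactly as one would expect. The only step with real content is the forward half of condition (iii) of Definition \ref{def:gradedposet} --- that a covering in $\CC(A)$ raises the dimension by exactly one --- and your argument for it is sound: realize $C_2\cong C(X)$ for a finite discrete space $X$ with $|X|=\dim(C_2)$, use Lemmas \ref{lem:subalgofCXisquotientofX} and \ref{lem:subalgebraisintersectionofidealalgebras} to identify the unital C*-subalgebras of $C(X)$ lying below $C_2$ with partitions of $X$ (dimension equal to the number of blocks), and note that if the partition attached to $C_1$ has at least two fewer blocks than $|X|$, splitting one non-singleton block produces a partition, hence a subalgebra, strictly between $C_1$ and $C_2$, contradicting the covering hypothesis. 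This partition picture is precisely the structure the paper itself invokes later (it remarks after Proposition \ref{prop:CAisdirectlyindecomposablewhenAfinitedimcomCalg} that $\CC(A)$ is a partition lattice when $A$ is commutative and finite dimensional), so your fleshed-out argument is very much in the spirit of the paper, just made explicit where the author chose not to.
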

\begin{proof}
Trivial.
\end{proof}

Combining Lemmas \ref{lem:gradedimpliesartinian} and \ref{lem:rankfunctiononCA}, we find that if $A$ is finite dimensional, then $\CC(A)$ is both Artinian and Noetherian. We shall prove that the converse holds as well.

\begin{lemma}\label{lem:maximalelementsofCA}
	Let $A$ be a C*-algebra. Then every element of $\CC(A)$ is contained in a maximal element of $\CC(A)$. In particular, the set $\max\CC(A)$ is non-empty.
\end{lemma}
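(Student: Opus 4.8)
The plan is to invoke Zorn's Lemma, using the directed-completeness of $\CC(A)$ established in Proposition \ref{prop:CAisdcpo} to supply upper bounds for chains. First I would fix an arbitrary $C\in\CC(A)$ and consider the subset $$\up C=\{D\in\CC(A):C\subseteq D\},$$ which is non-empty since $C\in\up C$. The goal is to produce a maximal element of $\up C$, and for this it suffices to verify the hypothesis of Zorn's Lemma: every chain in $\up C$ admits an upper bound in $\up C$.

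To check this, I would take a chain $\mathcal{T}\subseteq\up C$. Since a chain is in particular a directed subset of $\CC(A)$, Proposition \ref{prop:CAisdcpo} guarantees that its join $\bigvee\mathcal{T}=\overline{\bigcup\mathcal{T}}$ exists in $\CC(A)$. This join is an upper bound of $\mathcal{T}$, and because every member of $\mathcal{T}$ contains $C$, so does $\bigvee\mathcal{T}$; hence $\bigvee\mathcal{T}\in\up C$. Zorn's Lemma then delivers a maximal element $M$ of $\up C$.

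The final step is to upgrade maximality in $\up C$ to maximality in all of $\CC(A)$. Suppose $N\in\CC(A)$ satisfies $M\subseteq N$; then $C\subseteq M\subseteq N$ forces $N\in\up C$, so maximality of $M$ in $\up C$ gives $N=M$. This shows $M\in\max\CC(A)$ with $C\subseteq M$, proving that every element of $\CC(A)$ lies below a maximal element. For the last assertion, I would note that $\CC(A)$ is non-empty since it contains $\C 1_A$ by Lemma \ref{lem:CAismeetsemilattice}, so applying the construction to $C=\C 1_A$ shows $\max\CC(A)\neq\emptyset$. I expect no genuine obstacle here: the one subtle point—that a chain of commutative C*-subalgebras has an upper bound that is again commutative and closed—has already been absorbed into the dcpo statement of Proposition \ref{prop:CAisdcpo}, so the argument reduces to bookkeeping with the up-set $\up C$.
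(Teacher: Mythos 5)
Your proposal is correct and follows essentially the same route as the paper: both apply Zorn's Lemma to the up-set $\up C$, use Proposition \ref{prop:CAisdcpo} (chains being directed) to supply upper bounds, and then upgrade maximality in $\up C$ to maximality in $\CC(A)$ by the same observation. The only difference is cosmetic: the paper additionally remarks that the Noetherian case bypasses Zorn's Lemma, an aside your argument does not need.
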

\begin{proof}
	Let $C\in\CC(A)$ and let $\mathcal{S}=\{D\in\CC(A):C\subseteq D\}$. Then $\mathcal{S}$ is non-empty, so if $\CC(A)$ is Noetherian, we immediately find that $\mathcal{S}$ contains a maximal element. If $\CC(A)$ is not Noetherian, we need Zorn's Lemma. Let $\D=\{D_i\}_{i\in I}$ be a chain in $\mathcal{S}$. Then $\D$ is certainly directed, hence it must have a join $\bigvee\D$ by Proposition \ref{prop:CAisdcpo}. Since $\bigvee\D$ clearly contains $C$, we have $\bigvee\D\in\mathcal{S}$. So for every chain $\CC(A)$, there is an upper bound for the chain in $\mathcal{S}$. By Zorn's Lemma it follows that $\mathcal{S}$ contains a maximal element $M$. Now, $M$ must also be maximal in $\CC(A)$, since if there is some $A\in\CC(A)$ such that $M\subseteq A$, then $C\subseteq A$, so $A\in\mathcal{S}$. By the maximality of $M$ with respect to $\mathcal{S}$, it follows that $A$ must be equal to $M$. 
\end{proof}

\begin{proposition}\label{prop:finitedimensionalmaximalcommutativesubalgebras}
Let $A$ be a C*-algebra and $M$ a maximal commutative *-subalgebra. If $M$ is finite dimensional, then $A$ must be finite dimensional as well.
\end{proposition}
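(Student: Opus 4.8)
The plan is to exploit a standard feature of maximal commutative C*-subalgebras, namely that $M$ coincides with its own relative commutant, and then to run a Peirce (block) decomposition of $A$ against the finitely many minimal projections of $M$, counting dimensions corner by corner.

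First I would record the fact that $M$ equals its relative commutant $M'=\{a\in A: am=ma\ \forall m\in M\}$ in $A$. The inclusion $M\subseteq M'$ is immediate from commutativity of $M$. For the reverse inclusion, given a self-adjoint $a\in M'$, the C*-algebra $C^*(M\cup\{a\})$ is commutative (since $a$ commutes with all of $M$ and with $a^*=a$) and contains $M$, so maximality of $M$ in $\CC(A)$ forces $a\in M$; as $M'$ is a *-subalgebra and hence spanned by its self-adjoint elements, we conclude $M'=M$. Next, since $M$ is finite dimensional and commutative, $M\cong\C^n$ and is generated by its minimal projections $p_1,\dots,p_n$, which are mutually orthogonal and satisfy $\sum_i p_i=1_A$. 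An element $a\in A$ commutes with every $p_i$ (equivalently, with all of $M$) if and only if $p_iap_j=0$ for $i\neq j$, i.e. if and only if $a$ is block diagonal. Thus $M'=\bigoplus_{i=1}^n p_iAp_i$, a space of dimension $n$ by $M'=M$; since this sum already contains the $n$ linearly independent projections $p_i$, each diagonal corner must collapse to $p_iAp_i=\C p_i$.

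Finally I would bound the off-diagonal corners. Using the Peirce decomposition $A=\bigoplus_{i,j}p_iAp_j$, it suffices to show each $p_iAp_j$ is at most one dimensional. Given a nonzero $x\in p_iAp_j$ with $i\neq j$, both $x^*x\in p_jAp_j=\C p_j$ and $xx^*\in p_iAp_i=\C p_i$, so after normalizing, $v:=x/\|x\|$ is a partial isometry with $v^*v=p_j$ and $vv^*=p_i$. For any $y\in p_iAp_j$ we then have $v^*y\in p_jAp_j=\C p_j$, say $v^*y=c\,p_j$, and consequently $y=p_iy=vv^*y=c\,v$, so $p_iAp_j=\C v$. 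Summing the dimensions of the $n^2$ corners yields $\dim A\leq n^2<\infty$, as required.

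The hard part is controlling the off-diagonal corners, which a priori could be infinite dimensional; the leverage that kills them is precisely that the diagonal corners have already been squeezed down to $\C p_i$, turning every nonzero off-diagonal element into a partial isometry linking $p_i$ to $p_j$. Consequently the genuinely crucial step is the identification $M=M'$ together with its reformulation $\bigoplus_i p_iAp_i=M$, since this is what forces the diagonal corners to be one dimensional and thereby makes the final dimension count possible.
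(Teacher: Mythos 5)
Your proof is correct, and it supplies something the paper itself omits: the paper gives no argument for this proposition at all, but merely cites it as Exercise 4.12 of \cite{KR1}, with the solution in \cite{KR3}. Your argument is a complete, self-contained proof and is essentially the standard one underlying that exercise. Each step checks out: maximality of $M$ forces $M'=M$ (your reduction to self-adjoint elements is needed and correctly handled, since $M'$ is $*$-closed); writing $M\cong\C^n$ with minimal projections $p_1,\ldots,p_n$ summing to $1_A$ is legitimate because a maximal commutative $*$-subalgebra automatically contains $1_A$; the equivalence ``$a$ commutes with all $p_i$ iff $p_iap_j=0$ for $i\neq j$'' identifies $M'$ with $\bigoplus_i p_iAp_i$, collapsing each diagonal corner to $\C p_i$; and the partial-isometry argument (with $v^*v=p_j$, $vv^*=p_i$, both scalars equal to $\|x\|^2$ by positivity and the C*-identity) correctly pins each off-diagonal corner $p_iAp_j$ to dimension at most one. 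Beyond settling the qualitative statement, your proof yields the quantitative bound $\dim A\leq n^2$ where $n=\dim M$, which is sharp: it is attained by $A=\MM_n(\C)$ with $M=D_n$, consistent with Lemma \ref{lem:dimensionofmaximalsubalgebraofmatrixalgebra}. So where the paper outsources the proof to the literature, your write-up could stand in its place essentially verbatim.
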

This statement can be found in \cite{KR1} as Exercise 4.12. The solution of this exercise can be found in \cite{KR3}.

In particular, when $A=\MM_{n}(\C)$, there is a nice characterization of the maximal commutative C*-subalgebras of $\CC(A)$.
\begin{lemma}\label{lem:dimensionofmaximalsubalgebraofmatrixalgebra}
	Let $A=\MM_n(\C)$ and $M\in\max\CC(A)$. Then $M$ is $n$-dimensional and there is some $u\in \mathrm{SU}(n)$ such that $$M=\{udu^*:d\in D_n\},$$ where $D_n$ is the commutative C*-subalgebra of $A$ consisting of all diagonal matrices.
\end{lemma}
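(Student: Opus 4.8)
The plan is to exploit the structure forced on $M$ by Proposition~\ref{prop:finitedimensionalmaximalcommutativesubalgebras}: since $A=\MM_n(\C)$ is finite dimensional, so is $M$, and a finite-dimensional commutative C*-algebra is $*$-isomorphic to $\C^k$ for $k=\dim M$. Under this isomorphism the standard basis vectors correspond to the minimal projections $p_1,\ldots,p_k$ of $M$; these are mutually orthogonal self-adjoint idempotents with $\sum_i p_i=1_A$, and $M=\mathrm{span}(p_1,\ldots,p_k)$. The whole argument reduces to understanding these projections.

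First I would show that maximality forces each $p_i$ to have rank one. Suppose some $p_i$ had rank at least two. The corner $p_iAp_i$ is $*$-isomorphic to $\MM_m(\C)$ with $m\geq 2$ (acting on the range of $p_i$, which is the unit of this corner), so it contains a projection $q$ with $0<q<p_i$. Such a $q$ commutes with every $p_j$: for $j\neq i$ we have $qp_j=qp_ip_j=0=p_jp_iq=p_jq$ by orthogonality, while $qp_i=q=p_iq$ since $q\leq p_i$. Hence $q$ commutes with all generators of $M$, so $C^*(M\cup\{q\})$ is commutative and, being finite dimensional, is already a C*-subalgebra; it properly contains $M$ because $q$ is not a linear combination of the $p_j$ (any such combination equal to $q$ would force $q\in\{0,p_i\}$). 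This contradicts $M\in\max\CC(A)$, so every $p_i$ has rank one.

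Since the $p_i$ are mutually orthogonal rank-one projections summing to $1_A$, their ranks add up to $n$, forcing $k=n$ and hence $\dim M=n$. Each $p_i$ is the orthogonal projection onto a line $\C e_i$, and orthogonality of the $p_i$ makes $(e_1,\ldots,e_n)$ an orthonormal basis of $\C^n$. Let $u$ be the unitary whose $i$-th column is $e_i$. Then $u$ maps the $i$-th standard basis vector to $e_i$, so $u$ carries the standard diagonal projection onto $\C e_i$ to $p_i$; as $D_n$ is spanned by these standard diagonal projections, we obtain $uD_nu^*=\mathrm{span}(p_1,\ldots,p_n)=M$, that is, $M=\{udu^*:d\in D_n\}$.

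Finally, I would upgrade $u$ to an element of $\mathrm{SU}(n)$. Writing $\det u=e^{i\theta}$ and setting $d_0=\mathrm{diag}(e^{-i\theta},1,\ldots,1)\in D_n$, the unitary $ud_0$ satisfies $\det(ud_0)=1$, while $ud_0D_nd_0^*u^*=uD_nu^*=M$ because the diagonal unitary $d_0$ normalises $D_n$. The only genuinely non-routine step is showing that the minimal projections of $M$ are rank one; once that is in hand, both the dimension count and the identification of $M$ with a conjugate of $D_n$ are the standard change-of-orthonormal-basis argument.
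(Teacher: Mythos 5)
Your proof is correct. A point of comparison: the paper itself offers no argument for this lemma --- it simply defers to \cite[Example 5.3.5]{Heunen} --- so your write-up is a genuinely self-contained replacement rather than a variant of an internal proof. Two remarks. First, your appeal to Proposition \ref{prop:finitedimensionalmaximalcommutativesubalgebras} is misdirected: that proposition asserts the \emph{converse} implication (if a maximal commutative *-subalgebra is finite dimensional, then $A$ is); the fact you actually use --- that $M$ is finite dimensional --- is immediate, since $M$ is a linear subspace of the finite-dimensional space $\MM_n(\C)$. This is only a citation slip, not a gap, because nothing in your argument depends on the proposition. Second, there is a shorter classical route worth knowing: the elements of $M$ form a *-closed commuting family of normal matrices, hence are simultaneously unitarily diagonalizable, so $u^*Mu\subseteq D_n$ for some unitary $u$; since $uD_nu^*$ is a commutative C*-subalgebra containing $1_A$ and containing $M$, maximality forces $M=uD_nu^*$, which yields $\dim M=\dim D_n=n$ at once, and the determinant correction producing an element of $\mathrm{SU}(n)$ is then identical to yours. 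Your route through minimal projections is longer (it needs the corner-algebra argument to show each minimal projection of $M$ has rank one) but buys that structural fact explicitly, and it localizes the use of maximality to a single, transparent step: adjoining a subprojection would enlarge $M$. Both arguments are sound.
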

\begin{proof}
	See \cite[Example 5.3.5]{Heunen}.
\end{proof}

\begin{definition}
 Let $X$ be a topological space with topology $\O(X)$. Then $X$ is called \emph{Noetherian}\index{Noetherian topological space} if the poset $\O(X)$ ordered by inclusion is Noetherian.
\end{definition}

\begin{lemma}\cite[Exercise I.1.7]{Hartshorne}\label{lem:Noetheriancharacterization}
 Let $X$ be a topological space. If $X$ is Noetherian and Hausdorff, then $X$ must be finite.
\end{lemma}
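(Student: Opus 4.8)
The plan is to deduce that $X$ must be discrete, and then to invoke compactness of $X$ to force it to be finite.

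First I would show that any Noetherian space is compact. Given an open cover $\{U_i\}_{i\in I}$, consider the collection of all open sets that arise as finite unions $U_{i_1}\cup\cdots\cup U_{i_n}$. This collection is non-empty, so by the ascending chain condition on $\O(X)$ it has a maximal element $U$. If $U\neq X$, I pick $x\notin U$ and some $U_j$ with $x\in U_j$; then $U\cup U_j$ is a strictly larger finite union, contradicting maximality. Hence $U=X$, and I have extracted a finite subcover.

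Next I would check that the Noetherian property is inherited by subspaces. If $Y\subseteq X$ and $V_1\subseteq V_2\subseteq\cdots$ is an ascending chain of open subsets of $Y$, I write each $V_n=U_n\cap Y$ with $U_n\in\O(X)$ and replace $U_n$ by $U_1\cup\cdots\cup U_n$; this produces an ascending chain in $\O(X)$, which stabilizes, so the original chain in $Y$ stabilizes too. Consequently every subspace of $X$ is Noetherian, and by the previous step every subspace of $X$ is compact.

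Now I would bring in the Hausdorff hypothesis. Let $S\subseteq X$ be arbitrary. Viewed as a subspace, $S$ is Noetherian, hence compact; since a compact subset of a Hausdorff space is closed, $S$ is closed in $X$. Thus \emph{every} subset of $X$ is closed. Applying this to the complement of a subset shows that every subset is open as well, so $X$ carries the discrete topology. Finally, $X$ is itself compact, and a discrete compact space is finite. The argument is largely routine; the only steps demanding care are deriving quasi-compactness from the ascending chain condition and verifying that Noetherianness descends to subspaces, as it is precisely these facts that let me apply ``compact $\Rightarrow$ closed'' to every subset of $X$ simultaneously. The conceptual heart of the proof is the chain of implications \emph{every subset closed} $\Rightarrow$ \emph{discrete} $\Rightarrow$ (with compactness) \emph{finite}.
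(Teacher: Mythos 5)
Your proof is correct and follows essentially the same route as the paper's: every subset of $X$ is compact, the Hausdorff hypothesis then makes subsets closed (the paper only needs this for complements of singletons), so $X$ is discrete, and compactness of $X$ itself forces finiteness. The only cosmetic difference is that you obtain compactness of subsets by proving the Noetherian property is hereditary to subspaces and combining this with compactness of the whole space, whereas the paper covers a subset $Y$ directly by open sets of $X$ and applies the greatest-element formulation of the Noetherian condition to the directed family of finite unions.
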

\begin{proof}
First we show that every subset of $X$ is compact. So if $Y\subseteq X$, let $\mathcal U$ be a cover of $Y$. Let $\mathcal V$ be the set of all finite unions of elements of $\mathcal U$. Then $\mathcal V$ covers $Y$ as well, and moreover, $\mathcal V$ is directed. Since $\O(X)$ is Noetherian, $\mathcal V$ has a greatest element $V$. Now, $V$ contains every element of $\mathcal{V}$ and since $\mathcal{V}$ covers $Y$, we find that $V$ must contain $Y$. It follows from the definition of $\mathcal V$ that $V$ can be written as a finite union of elements of $\mathcal U$, so $\mathcal U$ has a finite subcover. Thus $Y$ is compact. Now let $x\in X$. Then $X\setminus\{x\}$ is compact, hence closed. Hence $\{x\}$ is open, and it follows that $X$ is discrete. Since $X$ itself is compact, $X$ must be finite.
\end{proof}

\begin{proposition}\label{prop:ArtinianandNoetherianisfinitedimensional}
 Let $A$ be a C*-algebra. Then $A$ is finite dimensional if and only if $\CC(A)$ is Artinian if and only if $\CC(A)$ is Noetherian.
\end{proposition}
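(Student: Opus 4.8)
The forward implication, that finite-dimensionality of $A$ forces $\CC(A)$ to be both Artinian and Noetherian, has already been obtained by combining Lemmas \ref{lem:gradedimpliesartinian} and \ref{lem:rankfunctiononCA} via the rank function $\dim$ with bounded range. So the plan is to prove the two converse implications simultaneously, by establishing their common contrapositive: if $A$ is infinite dimensional, then $\CC(A)$ is neither Artinian nor Noetherian. This reduces the whole equivalence to a single statement about infinite-dimensional algebras.

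The first step is a reduction to the commutative case. By Lemma \ref{lem:maximalelementsofCA} the set $\max\CC(A)$ is non-empty, so I would fix a maximal element $M$. A maximal element of $\CC(A)$ is in fact a maximal commutative $*$-subalgebra (any commutative $*$-subalgebra containing $M$ has commutative closure lying in $\CC(A)$, which by maximality equals $M$), so Proposition \ref{prop:finitedimensionalmaximalcommutativesubalgebras} applies in its contrapositive form: since $A$ is infinite dimensional, $M$ is infinite dimensional too. By Gelfand duality $M\cong C(X)$ for a compact Hausdorff space $X$, and $X$ is infinite precisely because $M$ is infinite dimensional. Finally $\CC(M)=\down M$ is a subposet of $\CC(A)$, and both chain conditions, taken in the ``every non-empty subset has a minimal (resp.\ maximal) element'' form, are inherited by arbitrary subposets; hence it suffices to exhibit in $\CC(C(X))$, for $X$ an infinite compact Hausdorff space, both an infinite strictly descending chain and an infinite strictly ascending chain.

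For the descending chain I would use the subalgebras $C_K$ of Definition \ref{def:idealalgebra}, noting that $K\mapsto C_K$ is inclusion-reversing. Choosing distinct points $x_1,x_2,\ldots\in X$ and setting $J_n=\{x_1,\ldots,x_n\}$ (closed since $X$ is Hausdorff) gives $C_{J_1}\supseteq C_{J_2}\supseteq\cdots$; strictness follows from Urysohn's lemma, which for each $n$ produces a continuous function constant on $J_n$ but taking a different value at $x_{n+1}\in J_{n+1}$, so $C_{J_n}\neq C_{J_{n+1}}$. This violates the descending chain condition. For the ascending chain I would instead use a decreasing chain of closed sets, and here Lemma \ref{lem:Noetheriancharacterization} is the crucial input: since $X$ is infinite and Hausdorff it cannot be a Noetherian topological space, so $\O(X)$ admits a strictly increasing chain $U_1\subsetneq U_2\subsetneq\cdots$ of open sets, necessarily with every $U_n\neq X$. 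Writing $K_n=X\setminus U_n$ gives a strictly decreasing chain of non-empty closed sets, whence $C_{K_1}\subseteq C_{K_2}\subseteq\cdots$, again strict by Urysohn's lemma, violating the ascending chain condition.

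The main obstacle is the ascending chain. For a general infinite compact Hausdorff space one cannot simply extract a convergent sequence and read off an infinite strictly increasing chain of proper open sets, so its existence is genuinely non-trivial; this is exactly what Lemma \ref{lem:Noetheriancharacterization} supplies, and the key conceptual point is recognizing that the failure of Noetherianity can be transferred from $\CC(A)$ down to the topological space $X$. The remaining care is routine: verifying strictness of both chains via Urysohn's lemma (ensuring the relevant closed sets are non-empty, so the separating functions genuinely fail to be constant) and checking that the chain conditions pass to the subposet $\CC(M)=\down M$.
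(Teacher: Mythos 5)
Your proposal is correct and follows essentially the same route as the paper: the forward direction via Lemmas \ref{lem:gradedimpliesartinian} and \ref{lem:rankfunctiononCA}, and the contrapositive of both converses by passing to an infinite-dimensional maximal commutative subalgebra $M\cong C(X)$ (via Lemma \ref{lem:maximalelementsofCA} and Proposition \ref{prop:finitedimensionalmaximalcommutativesubalgebras}), then exhibiting a non-stabilizing descending chain from finite point sets and a non-stabilizing ascending chain from Lemma \ref{lem:Noetheriancharacterization} applied to $\O(X)$, with strictness by Urysohn's Lemma in both cases. Your added care about non-emptiness of the closed sets $K_n$ and about a maximal element of $\CC(A)$ being a maximal commutative $*$-subalgebra only makes explicit points the paper leaves implicit.
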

\begin{proof}
 Assume that $A$ is finite dimensional. By Lemma \ref{lem:rankfunctiononCA}, $\CC(A)$ has a rank function whose range is bounded from above. By Lemma \ref{lem:gradedimpliesartinian}, $\CC(A)$ is both Artinian and Noetherian.
 
Assume that $A$ is not finite dimensional. By Lemma \ref{lem:maximalelementsofCA}, $\CC(A)$ has a maximal element $M$. By Proposition \ref{prop:finitedimensionalmaximalcommutativesubalgebras}, it follows that $M$ cannot be finite dimensional. Since $M$ is a unital commutative C*-algebra, the Gel'fand-Naimark Theorem assures that $M=C(X)$ for some compact Hausdorff space $X$, which must have an infinite number of points since $A$ is infinite dimensional.

We construct a descending chain in $\CC(A)$ as follows. First choose a countable subset $\{x_1,x_2,x_3,\ldots\}$ of $X$. Let $$C_n=\{f\in C(X):f(x_1)=\ldots=f(x_n)\}$$ for each $n\in\N$. Clearly, we have $C_1\supseteq C_2\supseteq C_3\supseteq\ldots$. Assume that  $i<j$. Then $\{x_1,\ldots,x_i\}$ and $\{x_j\}$ are disjoint closed sets, hence Urysohn's Lemma assures the existence of some $f\in C(X)$ such that $f[\{x_1,\ldots,x_i\}]=\{1\}$ and $f(x_j)=0$. Clearly, $f\in C_i$, but $f\notin C_j$. This shows that $C_i\neq C_j$, so the chain is descending, but it never stabilizes. 

We construct an ascending chain in $\CC(A)$ as follows. First we notice that since $X$ is infinite and Hausdorff, Lemma \ref{lem:Noetheriancharacterization} implies that $X$ is not Noetherian. So there is an ascending chain $O_1\subseteq O_2\subseteq\ldots$ of open subsets of $X$ that does not stabilize. For each $i\in\N$, let $F_i=X\setminus O_i$. Then $F_1\supseteq F_2\supseteq\ldots$ is a descending chain of closed subsets of $X$, which does not stabilize. For each $i\in I$ let $C_i=C_{F_i}$. Then $C_i$ is a C*-subalgebra of $C(X)$ and if $i\leq j$, we have $F_i\supseteq F_j$, so $C_i\subseteq C_j$. Moreover, if $i<j$ and $F_i\neq F_j$, then there is some $x\in F_i$ such that $x\notin F_j$. By Urysohn's Lemma, there is an $f\in C(X)$ such that $f(x)=0$ and $f(y)=1$ for each $y\in F_j$. Hence $f\in C_j$, but $f\notin C_i$. It follows that $C_i\neq C_j$, so $C_1\subseteq C_2\subseteq\ldots$ is an ascending chain that does not stabilize.

Thus $\CC(A)$ contains an ascending chain as well as a descending chain, neither of which stabilizes. Hence $\CC(A)$ can be neither Noetherian nor Artinian. 
\end{proof}

Recall that the center of a C*-algebra $A$ is the set $$\{x\in A:xy=yx\ \forall y\in A\},$$ which is usually denoted by $Z(A)$.

\begin{lemma}\label{lem:centerisintersectionmaximalcommutatives}
 Let $A$ be a C*-algebra. Then $Z(A)$ is a commutative C*-subalgebra. Moreover, $Z(A)$ is the intersection of all maximal commutative C*-subalgebras of $A$. 
\end{lemma}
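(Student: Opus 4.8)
The plan is to treat the two assertions separately, and to prove the equality $Z(A)=\bigcap_{M\in\max\CC(A)}M$ by establishing its two inclusions via rather different arguments.

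First I would check that $Z(A)$ is a commutative C*-subalgebra containing $1_A$. That it is a linear subspace closed under multiplication and that it contains $1_A$ is immediate from the definition. For $*$-closure, I would note that if $x\in Z(A)$ then $xy=yx$ for all $y\in A$; taking adjoints gives $y^*x^*=x^*y^*$, and since $y^*$ ranges over all of $A$ as $y$ does, $x^*\in Z(A)$. Commutativity is automatic, since any two central elements in particular commute with each other. Closedness (so that $Z(A)$ is genuinely a C*-subalgebra and not merely a $*$-subalgebra) I would obtain by writing $Z(A)=\bigcap_{y\in A}\ker(L_y-R_y)$, where $L_y,R_y\colon A\to A$ are the continuous maps $x\mapsto yx$ and $x\mapsto xy$; each kernel is closed, so $Z(A)$ is closed. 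Thus $Z(A)\in\CC(A)$.

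For the inclusion $Z(A)\subseteq M$ for every $M\in\max\CC(A)$, I would use a maximality argument. Fix such an $M$ and let $z\in Z(A)$. Recalling from the first step that $z^*\in Z(A)$ as well, both $z$ and $z^*$ are central, so together with the commutativity of $M$ the generators in $M\cup\{z\}$ pairwise commute; hence the $*$-subalgebra they generate is commutative, and so is its closure $C^*(M\cup\{z\})$. This is an element of $\CC(A)$ containing $M$, so by maximality it equals $M$, whence $z\in M$. As $M$ was arbitrary, $Z(A)\subseteq\bigcap_{M\in\max\CC(A)}M$.

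For the reverse inclusion I would exploit Lemma~\ref{lem:maximalelementsofCA}. Let $x\in\bigcap_{M\in\max\CC(A)}M$ and let $a\in A$ be arbitrary, written as $a=a_1+ia_2$ with $a_1,a_2$ self-adjoint. For any self-adjoint $b$, the algebra $C^*(b,1_A)$ is commutative and contains $1_A$, so it lies in $\CC(A)$ and by Lemma~\ref{lem:maximalelementsofCA} is contained in some $M_b\in\max\CC(A)$. Then $b$ and $x$ both lie in the commutative algebra $M_b$, so $xb=bx$. Applying this with $b=a_1$ and $b=a_2$ and using linearity gives $xa=ax$; as $a$ was arbitrary, $x\in Z(A)$, so $\bigcap_{M\in\max\CC(A)}M\subseteq Z(A)$. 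I expect the only real subtlety — the main obstacle — to be the commutativity of $C^*(M\cup\{z\})$ in the second step: one must be careful to use that both $z$ and its adjoint are central before concluding that the generators pairwise commute, and that passing to the closure preserves commutativity. The reverse inclusion, resting on the reduction to self-adjoint parts and the existence of maximal elements above a given commutative subalgebra, is then essentially formal.
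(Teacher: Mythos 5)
Your proposal is correct and follows essentially the same route as the paper: closedness of $Z(A)$ via an intersection of kernels of continuous commutator maps, the inclusion $Z(A)\subseteq\bigcap\max\CC(A)$ via maximality of $M$ applied to the commutative algebra $C^*(M\cup\{z,z^*\})$ (you correctly flag that centrality of $z^*$ is needed here, just as the paper adjoins $x^*$ explicitly), and the reverse inclusion via splitting an arbitrary element into self-adjoint parts and embedding $C^*(b,1_A)$ into a maximal element using Lemma~\ref{lem:maximalelementsofCA}. No gaps.
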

\begin{proof}
 For each $y\in A$, consider the map $f_y:A\to A$ given by the assignment $x\mapsto xy-yx$. Clearly this is continuous and linear, so $\ker f_y$ is a closed linear subspace of $A$. Hence $Z(A)=\bigcap_{y\in A}\ker f_y$ is a closed linear subspace as well. If $x,y\in Z(A)$ and $z\in A$, then $$xyz=xzy=zxy,$$ so $xy\in Z(A)$. Moreover, $x^*z=(z^*x)^*=(xz^*)^*=zx^*$, so $x^*\in Z(A)$. Clearly $xy=yx$, and $1_A\in Z(A)$, hence $Z(A)\in\CC(A)$.

Let $x\in\bigcap\max\CC(A)$, i.e., $x\in M$ for each maximal $M\in\CC(A)$. Let $y\in A$. Then $y$ can be written as a linear combination of two self-adjoint elements $a_1,a_2$. If $a\in A$ is self-adjoint, then $C^*(a,1)$ is a commutative C*-subalgebra of $A$ containing $a$. By Lemma \ref{lem:maximalelementsofCA}, it follows that there are $M_1,M_2\in\max\CC(A)$ such that $a_i\in M_i$ for $i=1,2$. Since $x\in M_1,M_2$, it follows that $x$ commutes with both $a_1$ and $a_2$. Hence $x$ commutes with $y$, so $x\in Z(A)$. Thus $\bigcap\max\CC(A)\subseteq Z(A)$.

Now assume that $x\in Z(A)$. Since $x$ commutes with all elements of $A$, it commutes in particular with $x^*$. Hence $x$ is normal. We have $x^*\in Z(A)$ as well, for $Z(A)$ is a *-subalgebra of $A$.  Let $M\in\max\CC(A)$. Then $M\cup\{x,x^*\}$ is a set of mutually commuting elements, which is *-closed and contains $1_A$. It follows that $C^*(M\cup\{x,x^*\})$, the C*-subalgebra of $A$ generated by $M\cup\{x,x^*\}$, is commutative. Since $M$ is maximal, $C^*(M\cup\{x,x^*\})$ must be equal to $M$. As a consequence, $x\in M$, so we find that $x$ is contained in every maximal commutative C*-subalgebra of $A$. Hence $Z(A)\subseteq\bigcap\max\CC(A)$.
\end{proof}

\begin{lemma}\label{lem:centerandproducts}
	Let $A_1,\ldots, A_n$ be C*-algebras. Then $$
	Z\left(\bigoplus_{i=1}^nA_i\right)  =  \bigoplus_{i=1}^nZ(A_i).$$
\end{lemma}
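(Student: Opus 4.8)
The plan is to prove the two inclusions of the claimed equality directly, exploiting the fact that multiplication in $\bigoplus_{i=1}^n A_i$ is defined coordinatewise. Writing a general element of the direct sum as a tuple $(a_1,\ldots,a_n)$ with $a_i\in A_i$, and recalling that
$$(a_1,\ldots,a_n)(b_1,\ldots,b_n)=(a_1b_1,\ldots,a_nb_n),$$
the whole statement should reduce to the observation that commutation in the product can be tested coordinate by coordinate. Under this identification, $\bigoplus_{i=1}^n Z(A_i)$ is simply the set of tuples whose $i$-th entry lies in $Z(A_i)$ for each $i$.

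For the inclusion $\bigoplus_{i=1}^n Z(A_i)\subseteq Z\bigl(\bigoplus_{i=1}^n A_i\bigr)$, I would take a tuple $(a_1,\ldots,a_n)$ with each $a_i\in Z(A_i)$ together with an arbitrary $(b_1,\ldots,b_n)$ in the direct sum. Since $a_ib_i=b_ia_i$ for every $i$, the two products $(a_1,\ldots,a_n)(b_1,\ldots,b_n)$ and $(b_1,\ldots,b_n)(a_1,\ldots,a_n)$ agree in every coordinate, so the tuple is central.

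For the reverse inclusion the key move is to isolate a single coordinate. Given a central element $(a_1,\ldots,a_n)$, I fix an index $i$ and an arbitrary $b_i\in A_i$, and test centrality against the element that equals $b_i$ in slot $i$ and $0$ elsewhere. Comparing the $i$-th coordinates of the two products forces $a_ib_i=b_ia_i$; since $b_i$ was arbitrary this gives $a_i\in Z(A_i)$, and repeating the argument for each $i$ places the tuple in $\bigoplus_{i=1}^n Z(A_i)$.

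I expect no genuine obstacle here: the identity is a formal consequence of the coordinatewise product, and the only point needing a moment's care is that one must use test elements supported on a single coordinate (rather than arbitrary tuples) in order to extract the commutation relation in each factor separately. The argument uses nothing of the C*-structure beyond the definition of the direct-sum multiplication, so it would in fact hold verbatim for any finite direct product of rings.
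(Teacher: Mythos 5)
Your proof is correct and is essentially the paper's own argument spelled out: the paper disposes of the lemma in one line by noting that multiplication on $\bigoplus_{i=1}^n A_i$ is computed coordinatewise, which is exactly the fact your two inclusions exploit. One minor remark: your caution about needing test elements supported in a single coordinate is unnecessary --- testing against arbitrary tuples $(b_1,\ldots,b_n)$ already forces $a_ib_i=b_ia_i$ in every coordinate, since equality of tuples is checked coordinatewise and each $b_i$ ranges independently over $A_i$.
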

\begin{proof}
This follows directly from the fact that multiplication on $\bigoplus_{i=1}^nA_i$ is calcultated coordinatewisely.
\end{proof}

\begin{lemma}\label{lem:upcenterandproducts}
	Let $A_1,\ldots, A_n$ be C*-algebras. Let $A=\bigoplus_{i=1}^nA_i$ and $C\in\CC(A)$ such that $Z(A)\subseteq C$. Then there are $C_i\in\CC(A_i)$ such that $Z(A_i)\subseteq C_i$ and $C=\bigoplus_{i=1}^nC_i$.
\end{lemma}

\begin{proof}
	Let $p_i:A\to A_i$ be the projection on the $i$-th factor. Then we obtain an order morphism $\CC(p_i):\CC(A)\to\CC(A_i)$. Let $C_i=\CC(p_i)(C)$, or equivalently, $C_i=p_i[C]$. Then $$Z(A_i)=p_i\left[\bigoplus_{i=1}^nZ(A_i)\right]=p_i[Z(A)]\subseteq p_i[C]=C_i$$ 
	for each $i\in I$.
	
	Let $c\in C$, then $p_i(c)\in C_i$ for each $i=1,\ldots,n$, so $$c=p_1(c)\oplus\ldots\oplus p_n(c).$$ Thus $c\in\bigoplus_{i=1}^nC_i$, hence $C\subseteq\bigoplus_{i=1}^nC_i$.
	
	Let $c_1\oplus\ldots \oplus c_n\in\bigoplus_{i=1}^nC_i$. This means that for each $i=1,\ldots,n$ there is a $d^i\in C$ such that $p_i(d^i)=c_i$.
	Here $d^i=d_1^i\oplus\ldots\oplus d_n^i$, with $d_j^i\in A_j$, and in particular we have $d^i_i=c_i$. For each $j=1,\ldots,n$, let $e^j\in A$ be the element $e^j_1\oplus\ldots\oplus e^j_n$, with $$e^j_i=\begin{cases}
	1_{A_i} & i=j;\\
	0_{A_i} & i\neq j.
	\end{cases}$$
	Here $1_{A_i}$ and $0_{A_i}$ denote the unit and the zero of $A_i$, respectively. Since $1_{A_i},0_{A_i}\in Z(A_i)$, Lemma \ref{lem:centerandproducts} assures that $e^j\in Z(A)$. Since $Z(A)\subseteq C$, we find that $e^j\in C$. It follows that $f^j=e^jd^j\in C$. Here $f^j=f^j_1\oplus\ldots\oplus f^j_n$ with $$f^j_i=\begin{cases}
	c_i & i=j;\\
	0_{A_i} & i\neq j.
	\end{cases}$$
	Now, $$f^1+\ldots +f^n=c_1\oplus\ldots\oplus c_n=c,$$ and since $f^j\in C$, it follows that $c\in C$. So $C=\bigoplus_{i=1}^nC_i$.
\end{proof}

\begin{proposition}\label{prop:embeddingofCACBintoCAplusB}
	Let $A_1,\ldots,A_n$ be C*-algebras and let $A=\bigoplus_{i=1}^nA_i$. Define the map $\iota:\prod_{i=1}^n\CC(A_i)\to\CC(A)$ by $\langle C_1,\ldots, C_n\rangle\mapsto C_1\oplus\ldots\oplus C_n$, and let $\pi:\CC(A)\mapsto\prod_{i=1}^n\CC(A_i)$ be the map $\CC(p_1)\times\ldots\times\CC(p_n)$, where $p_i:A\to A_i$ denotes the projection on the $i$-th factor. Thus $\pi$ maps $C\in\CC(A)$ to $\langle p_1[C],\ldots,p_n[C]\rangle$. Then:
	\begin{enumerate}
		\item[(i)] $\iota$ is an embedding of posets;
		\item[(ii)] $\pi$ is surjective;
		\item[(iii)] $\pi\circ\iota=1_{\prod_{i=1}^n\CC(A_i)}$ and $1_{\CC(A)}\leq \iota\circ\pi$;
		\item[(iv)] the restriction of $\iota$ to a map $\prod_{i=1}^n\CC(A_i)\to\up Z(A)$ is an order isomorphism with inverse $\pi$.
	\end{enumerate}
\end{proposition}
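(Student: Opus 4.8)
The plan is to verify the four claims in order, using repeatedly that the coordinate projection $p_i$ satisfies $p_i[C_1\oplus\cdots\oplus C_n]=C_i$. First I would note that $\iota$ is well defined: for $\langle C_1,\ldots,C_n\rangle\in\prod_{i=1}^n\CC(A_i)$ the sum $C_1\oplus\cdots\oplus C_n$ is norm-closed (each $C_i$ is), commutative (multiplication in $A$ is coordinatewise), and contains $1_A=1_{A_1}\oplus\cdots\oplus 1_{A_n}$, so it lies in $\CC(A)$. For (i), order preservation is immediate from the coordinatewise definition, and the reflecting property follows by applying $p_i$: from $\bigoplus_iC_i\subseteq\bigoplus_iD_i$ one obtains $C_i=p_i[\bigoplus_jC_j]\subseteq p_i[\bigoplus_jD_j]=D_i$ for each $i$. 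Hence $\iota$ is an order embedding.

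For (iii) I would compute directly that $\pi(\iota(\langle C_1,\ldots,C_n\rangle))=\langle C_1,\ldots,C_n\rangle$, since $p_i[\bigoplus_jC_j]=C_i$; this gives $\pi\circ\iota=1$. For the inequality $1_{\CC(A)}\leq\iota\circ\pi$, observe that any $c\in C$ decomposes as $c=\bigoplus_i p_i(c)$ with $p_i(c)\in p_i[C]$, so $C\subseteq\bigoplus_i p_i[C]=\iota(\pi(C))$. Claim (ii) is then a formal consequence: $\pi\circ\iota=1$ exhibits a right inverse for $\pi$, so $\pi$ is surjective.

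The substantive step is (iv), where Lemma~\ref{lem:upcenterandproducts} carries the weight. By Lemma~\ref{lem:centerandproducts} we have $Z(A)=\bigoplus_iZ(A_i)$, so $\iota(\langle C_1,\ldots,C_n\rangle)=\bigoplus_iC_i$ contains $Z(A)$ exactly when $Z(A_i)\subseteq C_i$ for every $i$; this pins down which tuples land in $\up Z(A)$. Surjectivity of the restricted $\iota$ onto $\up Z(A)$ is precisely Lemma~\ref{lem:upcenterandproducts}: any $C$ with $Z(A)\subseteq C$ equals $\bigoplus_iC_i$ for suitable $C_i$, hence $C=\iota(\pi(C))$, which moreover upgrades the inequality of (iii) to the equality $\iota\circ\pi=1$ on $\up Z(A)$. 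Together with the injectivity from (i) and the fact that both $\iota$ and $\pi$ preserve order, this makes the restricted $\iota$ an order isomorphism with inverse $\pi$. I expect the main obstacle to be exactly this image-matching in (iv): ensuring the bijection is with $\up Z(A)$ requires the center condition $Z(A_i)\subseteq C_i$, which is automatic when the summands $A_i$ are factors (so $Z(A_i)=\C 1_{A_i}$), as in the Artin-Wedderburn decomposition to which the proposition is ultimately applied.
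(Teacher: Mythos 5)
Your proof is correct and follows essentially the same route as the paper's: direct coordinatewise verification of (i)--(iii) via the projections $p_i$, and then Lemma~\ref{lem:upcenterandproducts} (together with Lemma~\ref{lem:centerandproducts}) to get $\iota\circ\pi(C)=C$ for every $C\in\up Z(A)$, which upgrades the inequality of (iii) to the isomorphism of (iv). The only cosmetic differences are that you deduce (ii) formally from $\pi\circ\iota=1$ rather than by the identical direct computation, and that you state explicitly---as the paper's proof of (iv) does only implicitly by invoking $Z(A_i)\subseteq C_i$---that the restricted $\iota$ is really a bijection from the tuples $\langle C_1,\ldots,C_n\rangle$ with $Z(A_i)\subseteq C_i$, i.e.\ from $\prod_{i=1}^n\up Z(A_i)$, onto $\up Z(A)$.
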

\begin{proof}
		For each $i=1,\ldots,n$, let $C_i\in\CC(A_i)$. Then $C_1\oplus\ldots\oplus C_n$ is clearly a commutative C*-algebra of $A$, which is unital since $$1_{A}=1_{A_1}\oplus\ldots\oplus 1_{A_n}.$$ Hence the image of $\iota$ lies in $\CC(A)$, so $\iota$ is well defined. Furthermore, we remark that $\CC(p_i):\CC(A)\to\CC(A_i)$ is an order morphism by Lemma \ref{lem:CAisinvariantforA}.
	\begin{enumerate}
	\item[(i)]  Let $\langle C_1,\ldots, C_n\rangle$ and $\langle D_1,\ldots, D_n\rangle$ be elements of $\prod_{i=1}^n\CC(A_i)$. Then 
	$\langle C_1,\ldots,C_n\rangle\leq \langle D_1,\ldots,D_n\rangle$ implies $C_i\subseteq D_i$ for each $i=1,\ldots, n$. Hence $C_1\oplus\ldots\oplus C_n\subseteq D_1\oplus\ldots\oplus D_n$, which says exactly that 
	 $$\iota(\langle C_1,\ldots, C_n\rangle)\subseteq\iota(\langle D_1,\ldots, D_n\rangle).$$
	 
	 Conversely, if $\iota(\langle C_1,\ldots, C_n\rangle)\subseteq\iota(\langle D_1,\ldots, D_n\rangle)$, we have $$C_1\oplus\ldots\oplus C_n\subseteq D_1\oplus\ldots\oplus D_n.$$ If we let act $\CC(p_i)$ on both sides of this inclusion, we obtain the inclusion $C_i\subseteq D_i$ for each $i=1,\ldots,n$. Hence $$\langle C_1,\ldots, C_n\rangle\leq\langle D_1,\ldots,D_n\rangle.$$ Thus $\iota$ is an embedding of posets.
	 
	 \item[(ii)]
	 Let $\langle C_1,\ldots,C_n\rangle\in\prod_{i=1}^n\CC(A_i)$. If $C=C_1\oplus\ldots\oplus C_n$, then $C\in\CC(A)$ and
	 \begin{eqnarray*}
	 \pi(C) & = & \langle\CC(p_1)(C),\ldots,\CC(p_n)(C)\rangle=\langle p_1[C],\ldots,p_n[C]\rangle\\
	 & = & \langle C_1,\ldots,C_n\rangle.
	\end{eqnarray*}
	 
	\item[(iii)] Let $\langle C_1,\ldots, C_n\rangle\in\prod_{i=1}^n\CC(A_i)$. Let $C=\iota(\langle C_1,\ldots, C_n\rangle)$. Then  $C=C_1\oplus\ldots\oplus C_n$, and by the calculation in (ii), we obtain $\pi(C)=\langle C_1,\ldots, C_n\rangle$. Hence $\pi\circ\iota=1_{\prod_{i=1}^n\CC(A_i)}$.
	
	Let $C\in\CC(A)$. Then $$\iota\circ\pi(C)=\iota(\langle p_1[C],\ldots,p_n[C]\rangle)=p_1[C]\oplus\ldots\oplus p_n[C].$$ 
	Let $c\in C$. Since $C\subseteq A$, and $A=\bigoplus_{i=1}^nA_i$, we have $$c=c_1\oplus\ldots\oplus c_n,$$ with $c_i\in A_i$ for each $i=1,\ldots,n$. Hence $c_i=p_i(c)$, and we find $c=p_1(c)\oplus\ldots\oplus p_n(c)$, so $c\in p_1[C]\oplus\ldots\oplus p_n[C]$. But $$p_1[C]\oplus\ldots\oplus p_n[C]=\iota(\langle p_1[C],\ldots, p_n[C]\rangle)=\iota\circ\pi(C).$$ Hence $c\in\iota\circ\pi(C)$, so $C\subseteq\iota\circ\pi(C)$. We conclude that the inequality $1_{\CC(A)}\leq\iota\circ\pi$ holds.
	
	\item[(iv)] In order to show that $\iota$ restricts to an order isomorphism $$\prod_{i=1}^n\up Z(A_i)\to\up Z(A)$$ with inverse $\pi$, it is enough to show that $\iota\circ\pi(C)=C$ for each $C\in\up Z(A)$. Then the statement follows directly from the equality in (iii). So let $C\in\CC(A)$ such that $Z(A)\subseteq C$. Then Lemma \ref{lem:upcenterandproducts} assures that there are $C_i\in\CC(A_i)$ for each $i=1,\ldots,n$ such that $C=C_1\oplus\ldots\oplus C_n$ and $Z(A_i)\subseteq C_i$ for each $i=1,\ldots,n$. Then $p_i[C]=C_i$, hence 
	\begin{align*}
	\iota\circ\pi(C) &= \iota(\langle p_1[C],\ldots,p_n[C]\rangle)=\iota(\langle C_1,\ldots, C_n\rangle)\\
	& =  C_1\oplus\ldots\oplus C_n=C. \qedhere
	\end{align*}
	\end{enumerate}
\end{proof}

\begin{proposition}\label{prop:intervalZAkommaM}
	Let $A=\bigoplus_{i=1}^k\MM_{n_i}(\C)$, where $k,n_1,\ldots,n_k\in\N$. Then $[Z(A),M]\cong\prod_{i=1}^k\CC(\C^{n_i})$ for each $M\in\max\CC(A)$.
\end{proposition}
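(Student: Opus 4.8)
The plan is to reduce the global interval $[Z(A),M]$ to a product of local intervals, one sitting inside each matrix factor, and then to identify each such local interval with $\CC(\C^{n_i})$. First I would record, using Lemma \ref{lem:centerandproducts} together with the standard fact that $\MM_{n_i}(\C)$ is central (so $Z(\MM_{n_i}(\C))=\C 1_{n_i}$), that $Z(A)=\bigoplus_{i=1}^k\C 1_{n_i}$. The crucial observation is that $\C 1_{n_i}$ is the \emph{least} element of $\CC(\MM_{n_i}(\C))$, so that $\up Z(\MM_{n_i}(\C))=\CC(\MM_{n_i}(\C))$; hence Proposition \ref{prop:embeddingofCACBintoCAplusB}(iv) supplies an order isomorphism $\iota:\prod_{i=1}^k\CC(\MM_{n_i}(\C))\to\up Z(A)$, sending $\langle C_1,\ldots,C_k\rangle$ to $C_1\oplus\cdots\oplus C_k$, with inverse $\pi=\CC(p_1)\times\cdots\times\CC(p_k)$. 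Since $Z(A)=\bigcap\max\CC(A)$ by Lemma \ref{lem:centerisintersectionmaximalcommutatives}, every maximal $M$ satisfies $Z(A)\subseteq M$, so $[Z(A),M]\subseteq\up Z(A)$ and is therefore carried isomorphically by $\pi$ onto $\iota^{-1}([Z(A),M])$.

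Next I would transport $M$ through $\iota$. Because $Z(A)\subseteq M$, Lemma \ref{lem:upcenterandproducts} gives $M=M_1\oplus\cdots\oplus M_k$ with $M_i=p_i[M]$, i.e.\ $\iota^{-1}(M)=\langle M_1,\ldots,M_k\rangle$. As $\iota$ is an order isomorphism and $M$ is maximal in $\CC(A)$, hence in the up-set $\up Z(A)$, the tuple $\langle M_1,\ldots,M_k\rangle$ is maximal in the product $\prod_i\CC(\MM_{n_i}(\C))$, which forces each $M_i$ to be maximal in $\CC(\MM_{n_i}(\C))$. A tuple $\langle C_1,\ldots,C_k\rangle$ then lies in $\iota^{-1}([Z(A),M])$ exactly when $C_1\oplus\cdots\oplus C_k\subseteq M_1\oplus\cdots\oplus M_k$, and applying the projections $p_i$ shows this is equivalent to $C_i\subseteq M_i$ for every $i$. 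Thus $\iota^{-1}([Z(A),M])=\prod_{i=1}^k\down M_i$, and restricting $\iota$ yields $[Z(A),M]\cong\prod_{i=1}^k\down M_i$.

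Finally I would identify each factor $\down M_i$. Since $M_i$ is commutative, every unital C*-subalgebra of $M_i$ is automatically commutative, so the interval $\down M_i$ inside $\CC(\MM_{n_i}(\C))$ coincides with $\CC(M_i)$, the poset of commutative unital C*-subalgebras of $M_i$ itself. By Lemma \ref{lem:dimensionofmaximalsubalgebraofmatrixalgebra}, the maximal element $M_i$ equals $\{udu^*:d\in D_{n_i}\}$ for some $u\in\mathrm{SU}(n_i)$, hence $M_i$ is *-isomorphic to the diagonal algebra $D_{n_i}\cong\C^{n_i}$; fixing such a *-isomorphism $f_i:M_i\to\C^{n_i}$ and invoking Proposition \ref{prop:Coff}(iii) produces an order isomorphism $\CC(f_i):\CC(M_i)\to\CC(\C^{n_i})$. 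Combining everything gives $[Z(A),M]\cong\prod_{i=1}^k\down M_i=\prod_{i=1}^k\CC(M_i)\cong\prod_{i=1}^k\CC(\C^{n_i})$. The one step demanding genuine care is the factorization in the middle paragraph: verifying that the global interval factors \emph{exactly} as the product of the local intervals $\down M_i$ rests both on the decomposition $M=\bigoplus_i M_i$ from Lemma \ref{lem:upcenterandproducts} and on the coordinatewise characterization of the inclusion $C_1\oplus\cdots\oplus C_k\subseteq M_1\oplus\cdots\oplus M_k$; once this is in hand, the remaining identifications follow immediately from the cited results.
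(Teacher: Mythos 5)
Your proof is correct and follows essentially the same route as the paper's: both pass through Proposition \ref{prop:embeddingofCACBintoCAplusB}(iv) and Lemma \ref{lem:centerisintersectionmaximalcommutatives} to identify $[Z(A),M]$ with $\prod_{i=1}^k\down M_i$, and then use Lemma \ref{lem:dimensionofmaximalsubalgebraofmatrixalgebra} together with Proposition \ref{prop:Coff} to identify each $\down M_i$ with $\CC(\C^{n_i})$. The only (welcome) refinements are that you make explicit why $\up Z(\MM_{n_i}(\C))=\CC(\MM_{n_i}(\C))$, so that the isomorphism of Proposition \ref{prop:embeddingofCACBintoCAplusB}(iv) applies to the whole product, and that you identify $\down M_i$ with $\CC(M_i)$ and apply $\CC(f_i)$ for a *-isomorphism $f_i:M_i\to\C^{n_i}$, where the paper instead uses the order embedding $\CC(f)$ of Proposition \ref{prop:Coff}(ii) with image $\down M_i$.
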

\begin{proof}
	By Proposition \ref{prop:embeddingofCACBintoCAplusB}, there is an order morphism $$\pi:\CC(A)\to \prod_{i=1}^j\CC(\MM_{n_i}(\C))$$ whose restriction to $\up Z(A)$ is an order isomorphism with inverse $\iota$. Let $M\in\max\CC(A)$. By Lemma \ref{lem:centerisintersectionmaximalcommutatives} it follows that $Z(A)\subseteq M$, so $M\in\up Z(A)$, hence $M\in\max\up Z(A)$.
	Since $\pi$ is an order isomorphism, it follows that $\pi(M)$ is a maximal element of $\prod_{i=1}^k\CC(\MM_{n_i}(\C))$.
	Clearly there are $M_{n_i}\in\max\CC(\MM_{n_i}(\C))$ for each $i=1,\ldots,k$ such that $\pi(M)= \langle M_{n_1},\ldots, M_{n_k}\rangle$. It follows that $\down \pi(M)=\down M_{n_1}\times\ldots\times\down M_{n_k}$.

Since $\iota$ is the inverse of $\pi$ and has codomain $\up Z(A)$, we find that $$\iota[\down\pi(M)]=\down\iota\circ\pi(M)\cap\up Z(A)=\down M\cap\up Z(A)=[Z(A),M].$$ Hence the restriction $\iota:\down M_{n_1}\times\ldots\times\down M_{n_k}\to[Z(A),M]$ is an order isomorphism.

Notice that all maximal elements of $\CC(\MM_{n_i}(\C))$ are *-isomorphic by Lemma \ref{lem:dimensionofmaximalsubalgebraofmatrixalgebra}. More specificaly, $M_{n_i}\in\max\CC(\MM_{m_i}(\C))$ is *-isomorphic to $D_{n_i}$. Since $$D_{n_i}=\{\mathrm{diag}(\lambda_1,\ldots,\lambda_{n_i}):\lambda_1,\ldots,\lambda_{n_i}\in\C\},$$ we find that $M_{n_i}$ is *-isomorphic to $\C^{n_i}$. Hence there is an embedding $$f:\C^{n_i}\to \MM_{n_i}(\C)$$ such that $f[C^{n_i}]=M_{n_i}$. By Proposition \ref{prop:Coff}, we find that $$\CC(f):\CC(\C^{n_i})\to\CC(\MM_{n_i}(\C))$$ is an order embedding with image $\down M_{n_i}$. Thus there exists an order isomorphism between $\CC(\C^{n_i})$ and $\down M_{n_i}$ in $\CC(\MM_{n_i}(\C))$. Hence $[Z(A),M]\cong\prod_{i=1}^k\CC(\C^{n_i})$.
\end{proof}

\begin{definition}\cite[III.8]{Birkhoff}
	Let $\CC$ be a lattice. Then $\CC$ is called \emph{directly indecomposable}\index{directly indecomposable lattice} if $\CC\cong\CC_1\times\CC_2$ for some lattices $\CC_1,\CC_2$ implies that either $\CC_1=\mathbf{1}$ and $\CC_2=\CC$ or $\CC_1=\CC$ and $\CC_2=\mathbf{1}$, where $\mathbf 1$ denotes the one-point poset.
\end{definition}

The following result is also known as Hashimoto's Theorem.
\begin{theorem}\label{thm:Hashimoto}
	Let $\CC$ be a lattice with a least element $0$. If there are two direct decompositions of $\CC$ 
	\begin{eqnarray*}
		\CC & = & \A_1\times\ldots\times \A_n;\\
		\CC & = & \B_1\times\ldots\times\B_m,
	\end{eqnarray*}
	where $n,m\in\N$, then there are lattices $\CC_{ij}$, $i=1,\ldots,n$ and $j=1,\ldots,m$ such that 
	\begin{eqnarray*}
		\A_{i} & = & \CC_{i1}\times\ldots\times\CC_{im};\\
		\B_j & = & \CC_{1j}\times\ldots\times\CC_{nj}.
	\end{eqnarray*}
\end{theorem}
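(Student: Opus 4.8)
The plan is to phrase direct decompositions through their factor congruences and then form a common refinement inside the congruence lattice $\mathrm{Con}(\CC)$. A finite decomposition $\CC=\A_1\times\cdots\times\A_n$ is equivalent to a family of congruences $\rho_1,\dots,\rho_n$, where $\rho_i$ is the kernel of the projection onto $\A_i$ (so $\CC/\rho_i\cong\A_i$). Writing $\Delta$ and $\nabla$ for the least and greatest congruences, such a family is \emph{independent}: $\bigwedge_i\rho_i=\Delta$, each $\rho_i$ satisfies $\rho_i\vee\bigwedge_{k\neq i}\rho_k=\nabla$, and the members permute pairwise; conversely any independent family rebuilds the product via $x\mapsto(x/\rho_1,\dots,x/\rho_n)$. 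Applying this to the two given decompositions yields independent families $\{\rho_i\}_{i=1}^n$ and $\{\tau_j\}_{j=1}^m$, and the natural candidate for the refinement is
\[
\CC_{ij}:=\CC/(\rho_i\vee\tau_j),\qquad 1\le i\le n,\ 1\le j\le m .
\]

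Next I would verify that these $\CC_{ij}$ refine both decompositions. Fixing $i$ and working in $\A_i\cong\CC/\rho_i$, the congruences $\tau_j$ descend to $(\rho_i\vee\tau_j)/\rho_i$ on $\CC/\rho_i$, whose quotients are exactly the $\CC_{ij}$, so it suffices to show this descended family is again independent. The required meet condition $\bigwedge_j(\rho_i\vee\tau_j)=\rho_i$ and the join conditions reduce, via $\bigwedge_j\tau_j=\Delta$ and $\tau_j\vee\bigwedge_{k\neq j}\tau_k=\nabla$, to finite distributive identities $\rho_i\vee\bigwedge_j\tau_j=\bigwedge_j(\rho_i\vee\tau_j)$; these hold because the congruence lattice of any lattice is distributive (Funayama--Nakayama), and in any case are immediate for factor congruences. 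The symmetric computation gives $\B_j\cong\CC/\tau_j\cong\prod_i\CC_{ij}$, and since $\rho_i\vee\tau_j=\tau_j\vee\rho_i$ the two families of refinements coincide. Thus, \emph{provided the descended families are genuinely direct rather than merely subdirect}, we obtain $\A_i\cong\prod_j\CC_{ij}$ and $\B_j\cong\prod_i\CC_{ij}$, which is the assertion.

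The remaining point is exactly the hard part: passing from a subdirect to a direct decomposition requires the congruences in play to permute, and here $\rho_i$ and $\tau_j$ come from \emph{different} decompositions, for which permutability is not automatic — in a general lattice $\mathrm{Con}(\CC)$ is distributive but the factor congruences need not be pairwise permutable, and the refinement genuinely fails. This is where the hypothesis that $\CC$ has a least element $0$ is indispensable. The key lemma I would isolate is that in a lattice with $0$ every factor congruence is \emph{central} and any two factor congruences permute, even across distinct decompositions. Equivalently, a decomposition $\CC=\A_1\times\cdots\times\A_n$ corresponds to a family of pairwise complementary neutral ideals $I_1,\dots,I_n$ with $I_i\cong\A_i$ (the canonical image of $\A_i$ inside $\CC$) such that every $x\in\CC$ is a unique join $x=a_1\vee\cdots\vee a_n$ with $a_i\in I_i$. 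I would prove that neutrality forces such factor ideals to be closed under intersection and the unique-join representation to respect meets, and then realise the refinement directly as $\CC_{ij}\cong I_i\cap I'_j$ for the two families of factor ideals, consistent with $\CC/(\rho_i\vee\tau_j)$ from the first paragraph. Everything hinges on deriving centrality and permutability of factor congruences from the presence of $0$; securing this is the substance of Hashimoto's theorem and the main obstacle of the proof.
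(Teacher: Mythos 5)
Your sketch follows the same congruence-theoretic strategy as the source the paper cites for this statement (the paper itself gives no proof, only the reference to Gr\"atzer, Theorem III.4.2), but as it stands it is not a proof, because the one step you defer \emph{is} the theorem. The parts you actually verify are the routine ones: encoding each decomposition as an independent family of factor congruences, the identity $\bigwedge_j(\rho_i\vee\tau_j)=\rho_i\vee\bigwedge_j\tau_j$ from Funayama--Nakayama distributivity of $\mathrm{Con}(\CC)$, and the agreement of the two candidate refinements. What is missing is precisely the proof that factor congruences belonging to \emph{different} decompositions permute when $\CC$ has a least element --- equivalently, that the descended congruences $(\rho_i\vee\tau_j)/\rho_i$ on $\A_i$ give a direct rather than merely subdirect decomposition, or that the ideals $I_i\cap I'_j$ are genuine factor ideals. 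You name this as ``the key lemma I would isolate'' and then offer only intentions (``I would prove that neutrality forces\dots'') in its place; this lemma is not a technical verification to be waved at, it is the entire mathematical content of Hashimoto's theorem in this setting. To fill the gap the zero must be used concretely, for instance: if $\theta$ is a factor congruence with complementary factor congruence $\theta'$, then $x\mathrel{\theta}y$ if and only if $x\vee j=y\vee j$ for some $j\in[0]_{\theta}$, and the canonical representative of the component of $x$ in $\CC/\theta$ is $\max\bigl([0]_{\theta'}\cap\down x\bigr)$; cross-decomposition permutability then follows by explicit computation with these components. Nothing of this sort appears in your proposal, so it is a reduction of the theorem to its hardest step, not a proof of it.

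A secondary point: your explanation of why the hypothesis that $0$ exists is indispensable is incorrect. You assert that for a general lattice ``the refinement genuinely fails.'' It does not: every lattice is a connected poset (any $x,y$ are linked by $x\le x\vee y\ge y$), and Hashimoto's original theorem establishes the strict refinement property for all connected posets, hence for all lattices, with or without a least element. The $0$ is what makes this particular \emph{proof} work --- it lets one recover a factor congruence from its kernel ideal and thereby prove permutability by a short computation --- it is not needed for the truth of the conclusion. Since the paper only applies the theorem to the bounded lattices $[Z(A),M]$, nothing is lost there, but in your write-up this claim should be corrected: the presence of $0$ buys a simple argument, not the theorem itself.
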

\begin{proof}
	\cite[Theorem III.4.2]{Gratzer}
\end{proof}

\begin{corollary}\label{cor:Hashimoto}
Let $\A_1\times\ldots\times\A_n=\B_1\times\ldots\times\B_m$, where $n,m\in\N$ and the $\A_i$ and $\B_i$ are directly indecomposable lattices. Then $n=m$, and there is some permutation $$\pi:\{1,\ldots,n\}\to\{1,\ldots,n\}$$ such that $A_i\cong B_{\pi(i)}$ for each $i=1,\ldots,n$.
\end{corollary}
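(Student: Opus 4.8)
The plan is to deduce the statement from Hashimoto's Theorem. Write $\CC=\A_1\times\cdots\times\A_n=\B_1\times\cdots\times\B_m$ for the common lattice. To invoke Theorem \ref{thm:Hashimoto} we need $\CC$ to possess a least element; this holds in the intended application, where each factor is one of the bounded lattices $\CC(\C^{n_i})$ (bounded by Lemma \ref{lem:CAlatticewhenAcommutative}), so that $\CC$ itself has a least element. Applying the theorem to the two given decompositions yields a family of lattices $\CC_{ij}$, $1\le i\le n$ and $1\le j\le m$, with $\A_i=\CC_{i1}\times\cdots\times\CC_{im}$ for each $i$ and $\B_j=\CC_{1j}\times\cdots\times\CC_{nj}$ for each $j$. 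Throughout I take the factors $\A_i$ and $\B_j$ to be non-trivial (to have more than one element); this is implicit in the statement, since otherwise a decomposition could be padded with arbitrarily many copies of $\mathbf 1$ and the numbers $n,m$ would not be determined.

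The first key step is the observation that a non-trivial directly indecomposable lattice, when written as a finite product of lattices, has exactly one non-trivial factor. Indeed, grouping $\CC_{i1}\times\cdots\times\CC_{im}$ as $\CC_{i1}\times(\CC_{i2}\times\cdots\times\CC_{im})$ and applying the definition of direct indecomposability in an easy induction on $m$ shows that all but one of the $\CC_{ij}$ equal $\mathbf 1$, the remaining one being isomorphic to $\A_i$ (here one uses that a product of lattices is $\mathbf 1$ precisely when every factor is $\mathbf 1$, together with non-triviality of $\A_i$). Applying this to each row $i$ produces a well-defined index $\sigma(i)\in\{1,\ldots,m\}$ with $\CC_{i\sigma(i)}\cong\A_i$ and $\CC_{ij}\cong\mathbf 1$ for $j\ne\sigma(i)$. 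Symmetrically, applying it to each column $j$, using direct indecomposability of $\B_j$, produces $\tau(j)\in\{1,\ldots,n\}$ with $\CC_{\tau(j)j}\cong\B_j$ and $\CC_{ij}\cong\mathbf 1$ for $i\ne\tau(j)$.

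It then remains to check that $\sigma$ and $\tau$ are mutually inverse bijections. The point is that an entry $\CC_{ij}$ is non-trivial precisely when $j=\sigma(i)$, and equally precisely when $i=\tau(j)$. Hence $\CC_{i\sigma(i)}$ is non-trivial, which forces $\tau(\sigma(i))=i$; symmetrically $\sigma(\tau(j))=j$. Therefore $\sigma\colon\{1,\ldots,n\}\to\{1,\ldots,m\}$ is a bijection, so in particular $n=m$. Finally, setting $\pi=\sigma$ gives $\A_i\cong\CC_{i\sigma(i)}=\CC_{\tau(\sigma(i))\,\sigma(i)}\cong\B_{\sigma(i)}=\B_{\pi(i)}$, which is exactly the desired conclusion.

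I expect the only real obstacle to be bookkeeping rather than genuine mathematics: one must verify that the least-element hypothesis of Theorem \ref{thm:Hashimoto} is met and, above all, handle the degenerate factor $\mathbf 1$ consistently, since under the definition of direct indecomposability given in the paper the one-point lattice $\mathbf 1$ itself counts as directly indecomposable. Making the non-triviality convention explicit is what keeps the counting of factors, and hence the equality $n=m$, honest.
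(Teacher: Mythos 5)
Your proof is correct and takes exactly the route the paper intends: the paper states this as a corollary of Theorem \ref{thm:Hashimoto} with no proof at all, and your argument is precisely the standard deduction (common refinement, then matching up the unique non-trivial factors) that the paper leaves to the reader. Your two flagged caveats are genuine and well spotted: the non-triviality convention is needed because $\mathbf 1$ is directly indecomposable under the paper's definition and could otherwise pad a decomposition (making $n=m$ false as literally stated), and the least-element hypothesis of the cited theorem does hold in the paper's application, where the corollary is invoked only after the trivial factors $\CC(\C^{n_i})=\mathbf 1$ have been stripped off and all factors are bounded lattices.
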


\begin{definition}
	Let $\CC$ be a bounded lattice and $C\in\CC$. Then $D\in\CC$ is called a \emph{complement}\index{complement in a lattice} of $C$ if $C\wedge D=0$ and $C\vee D=1$.
\end{definition}

The next Proposition is actually an application of \cite[Theorem III.4.1]{Gratzer}.
\begin{proposition}\label{prop:directlyindecomposableboundedposet}
	Let $\CC$ be a bounded lattice. If $0$ and $1$ are the only elements of $\CC$ with a unique complement (namely each other), then $\CC$ is directly indecomposable.
\end{proposition}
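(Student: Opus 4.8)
The plan is to prove the contrapositive: if $\CC$ is \emph{not} directly indecomposable, then $\CC$ possesses an element different from $0$ and $1$ that has a unique complement, contradicting the hypothesis. This is exactly the easy half of the correspondence between direct decompositions of a bounded lattice and its complemented central elements underlying \cite[Theorem III.4.1]{Gratzer}; since we only need one implication, I would avoid invoking the full machinery of neutral elements and argue by hand.

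So suppose $\CC\cong\CC_1\times\CC_2$ for lattices $\CC_1,\CC_2$ neither of which is the one-point lattice $\mathbf{1}$, and fix an order isomorphism $\phi\colon\CC_1\times\CC_2\to\CC$. In the product, consider the element $e=(1_{\CC_1},0_{\CC_2})$. Since $\CC_1$ and $\CC_2$ each have at least two elements, we have $1_{\CC_1}\neq 0_{\CC_1}$ and $0_{\CC_2}\neq 1_{\CC_2}$, so $e$ differs from both $0=(0_{\CC_1},0_{\CC_2})$ and $1=(1_{\CC_1},1_{\CC_2})$.

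The heart of the argument is the claim that $e$ has a unique complement. First, $f=(0_{\CC_1},1_{\CC_2})$ is a complement, since $e\wedge f=(0_{\CC_1},0_{\CC_2})=0$ and $e\vee f=(1_{\CC_1},1_{\CC_2})=1$. Conversely, if $g=(g_1,g_2)$ is any complement of $e$, then meets and joins in the product are computed coordinatewise, so $e\wedge g=(g_1,0_{\CC_2})=0$ forces $g_1=0_{\CC_1}$ and $e\vee g=(1_{\CC_1},g_2)=1$ forces $g_2=1_{\CC_2}$; hence $g=f$. Thus $e$ has exactly one complement.

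Finally I would transport this back along $\phi$. An order isomorphism between bounded lattices preserves $\wedge$, $\vee$, $0$ and $1$, hence carries complements to complements bijectively; therefore $\phi(e)$ is an element of $\CC$ with a unique complement, namely $\phi(f)$, and $\phi(e)\notin\{0,1\}$ because $e\notin\{0,1\}$. This contradicts the assumption that $0$ and $1$ are the only elements of $\CC$ with a unique complement, so no such nontrivial decomposition exists and $\CC$ is directly indecomposable. The only mild subtlety — and the one place to be careful — is noting that the factors being nontrivial is precisely what keeps the witness $e$ off the top and bottom; everything else reduces to the coordinatewise computation above.
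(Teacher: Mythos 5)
Your proposal is correct and follows essentially the same route as the paper's own proof: both argue by contraposition, exhibit $\langle 1,0\rangle$ as an element of the product with unique complement $\langle 0,1\rangle$ via the coordinatewise computation of meets and joins, and transport this along the order isomorphism (which preserves meets, joins, and hence complements) to contradict the hypothesis. The only cosmetic difference is that the paper additionally verifies up front that $0$ and $1$ are indeed each other's unique complements, which your argument does not need.
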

\begin{proof}
	Let $D\in\CC$ be a complement of $1$. Then $D=1\wedge D=0$. If $D$ is a complement of $0$, then $D=0\vee D=1$. Thus $0$ and $1$ are each other's unique complement. 
	
	Now assume that there exists an order isomorphism $$\phi:\CC_1\times\CC_2\to\CC,$$ where $\CC_1$ and $\CC_2$ are bounded lattices not equal to $\mathbf 1$. This last condition implies that $$\langle 1,1\rangle\neq\langle 1,0\rangle\neq\langle 0,0\rangle.$$ Let $C=\phi(\langle 1,0\rangle)$. Then it follows that $1\neq C\neq 0$. Clearly $\langle 0,1\rangle$ is a complement of $\langle 1,0\rangle$ in $\CC_1\times\CC_2$, but it is also unique. Let $\langle D_1,D_2\rangle$ be a complement of $\langle 0,1\rangle$. Since meets and joins are calculated componentwise, we find
	\begin{eqnarray*}
		\langle D_1,0\rangle & = & \langle D_1\wedge 1,D_2\wedge 0\rangle=\langle D_1,D_2\rangle\wedge \langle 1,0\rangle =\langle 0,0\rangle;\\
		\langle 1,D_2\rangle & = & \langle D_1\vee 1,D_2\vee 0\rangle=\langle D_1,D_2\rangle\vee \langle 1,0\rangle =\langle 1,1\rangle,
	\end{eqnarray*}
	hence $D_1=0$, $D_2=1$. Thus $\langle D_1,D_2\rangle=\langle 0,1\rangle$, whence $\langle 1,0\rangle$ indeed has a unique complement.  Since $\phi$ is an order isomorphism, $\phi$ preserves meets and joins, hence $C$ has a complement $D=\phi(\langle 0,1)\rangle$. Now assume that $C$ has another complement $D'$. Since $\phi$ is an order isomorphism, it follows that $\phi^{-1}(D')$ is a complement of $\langle 1,0\rangle$, and by uniqueness of this complement, we obtain $\phi^{-1}(D')=\langle 0,1\rangle$. We find that $$D=\phi(\langle 0,1\rangle)=\phi\circ\phi^{-1}(D')=D'.$$ The statement follows now by contraposition.
\end{proof}

\begin{proposition}\label{prop:CAisdirectlyindecomposablewhenAfinitedimcomCalg}
	Let $A$ be a commutative finite-dimensional C*-algebra. Then $\CC(A)$ is a directly indecomposable lattice. 
\end{proposition}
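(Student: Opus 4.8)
The plan is to reduce to the case $A=C(X)$ with $X$ finite and then apply Proposition \ref{prop:directlyindecomposableboundedposet}. Since $A$ is commutative and finite dimensional, its Gel'fand spectrum $X$ is a finite Hausdorff, hence discrete, space, so $A\cong C(X)\cong\C^n$ with $n=|X|$. By Lemma \ref{lem:CAlatticewhenAcommutative} the poset $\CC(A)$ is a bounded (indeed complete) lattice, with least element $\C 1_A$ and greatest element $A$. In view of Proposition \ref{prop:directlyindecomposableboundedposet} it therefore suffices to show that no $C\in\CC(A)$ with $\C 1_A\subsetneq C\subsetneq A$ has a unique complement; concretely, I would exhibit two distinct complements of each such $C$.

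First I would record the order-theoretic dictionary of Section 1 in the finite case. By Definition \ref{def:subalgebraequivalencerelation} and Lemma \ref{lem:subalgebraisintersectionofidealalgebras}, each $C\in\CC(A)$ is recovered from its equivalence relation $\sim_C$ on $X$ as the algebra of functions constant on the $\sim_C$-classes, and $C\mapsto{\sim_C}$ is an order-reversing bijection from $\CC(A)$ onto the set of equivalence relations on $X$ (so $C\subseteq D$ iff ${\sim_D}\subseteq{\sim_C}$); here $\C 1_A$ corresponds to the total relation $X\times X$ and $A$ to the diagonal. A short computation with Lemmas \ref{lem:CAismeetsemilattice} and \ref{lem:subalgebraisintersectionofidealalgebras} then identifies the lattice operations: the meet $C\wedge D=C\cap D$ corresponds to the equivalence relation generated by ${\sim_C}\cup{\sim_D}$, and the join $C\vee D$ corresponds to ${\sim_C}\cap{\sim_D}$. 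Hence $D$ is a complement of $C$ exactly when ${\sim_C}\cap{\sim_D}$ is the diagonal (no two distinct points share a class of both relations) and the graph on $X$ with edge set ${\sim_C}\cup{\sim_D}$ is connected.

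Now fix $C$ with $\C 1_A\subsetneq C\subsetneq A$. Then $\sim_C$ is neither total nor the diagonal, so its classes $B_1,\dots,B_k$ satisfy $k\geq 2$ and at least one class, say $B_1$, has at least two elements; choose distinct $a,b\in B_1$ and a point $x_i\in B_i$ for $i=2,\dots,k$. I would then let $D_1$ be the subalgebra whose equivalence relation has the single non-trivial class $\{a,x_2,\dots,x_k\}$, all remaining points being singletons, and define $D_2$ in the same way using $\{b,x_2,\dots,x_k\}$. Each $D_j$ meets every $\sim_C$-class in at most one point, which yields the diagonal intersection, while its distinguished class links all the $B_i$ and thereby makes ${\sim_C}\cup{\sim_{D_j}}$ connected; thus $D_1$ and $D_2$ are both complements of $C$. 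They differ because $a\sim_{D_1}x_2$ whereas $a$ is a singleton for $\sim_{D_2}$ (note $a\neq x_2$, as they lie in different $\sim_C$-classes). This produces two complements of $C$ and verifies the hypothesis of Proposition \ref{prop:directlyindecomposableboundedposet}.

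The dictionary and the verification that the $D_j$ meet both complementation conditions are routine; the main obstacle is getting the relational description of the lattice operations exactly right — in particular that the meet is the \emph{generated} equivalence relation rather than the plain union of ${\sim_C}$ and ${\sim_D}$ — and then seeing that a single transversal class simultaneously separates points within blocks and connects the blocks. Once this is set up, non-uniqueness of complements is forced by the freedom to swap $a$ for $b$ inside the block $B_1$, which is precisely the structural feature that distinguishes an intermediate element of $\CC(A)$ from $\C 1_A$ and $A$.
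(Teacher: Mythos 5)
Your proposal is correct and takes essentially the same approach as the paper: after reducing to a finite discrete spectrum $X$ and invoking Proposition \ref{prop:directlyindecomposableboundedposet}, your two complements $D_1,D_2$ are precisely the paper's transversal algebras $C_K$ and $C_{K'}$ with $K=\{a,x_2,\ldots,x_k\}$ and $K'=\{b,x_2,\ldots,x_k\}$, the non-uniqueness coming from the same freedom to swap representatives inside a block with at least two points. The only difference is presentational: you verify the complement conditions through the partition-lattice dictionary (join $\leftrightarrow$ intersection of relations, meet $\leftrightarrow$ generated relation), whereas the paper checks them directly with functions via $B\cap C_K=\C 1_A$ and the decomposition $f=g+h$.
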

\begin{proof}
By Lemma \ref{lem:CAlatticewhenAcommutative}, $\CC(A)$ is a bounded lattice. Let $X$ be the spectrum of $A$.

If $X$ is a singleton set, then $A$ is one-dimensional, hence we have $\CC(A)=\{\C1_A\}$, so $\CC(A)=\mathbf 1$, the one-point lattice, and there is nothing to prove. If $X$ is a two-point set, then $\CC(A)=\{A,\C1_A\}$. So $\CC(A)$ contains no other elements than a greatest and a least one, and is therefore certainly directly indecomposable. 

Assume that $X$ has at least three points. Let $B\in\CC(A)$, assumed not equal to $\C1_A$ or $A$. By Lemma \ref{lem:subalgebraisintersectionofidealalgebras}, we have $B=\bigcap_{x\in X}C_{[x]_B}$. Since $X$ is finite, it follows that $X/\sim_B$ is finite as well. Notice that we cannot have $[x]_B=\{x\}$ for all $x\in X$, otherwise $B=C(X)=A$. Neither can $X/\sim_B$ be a singleton set, since otherwise $B=\C1_A$. For each element $[x]_B$ in $X/\sim_B$, choose a representative $x$. Let $K$ be the set of representatives. Notice that $K$ is not a singleton set, since $X/\sim_B$ contains at least two elements. Also notice that $K$ is not unique, since there is at least one $[x]_B\in X/\sim_B$ containing two or more points. Since $X$ is discrete, it follows that $K$ is closed. 

Let $f\in B\cap C_K$ and let $x,y\in X$ be points such that $x\neq y$. If $[x]_B=[y]_B$, then $f(x)=f(y)$. If $[x]_B\neq [y]_B$, then there are $x',y'\in K$ such that $x'\in [x]_B$ and $y'\in [y]_B$. Since $f\in C_K$, we find that $f(x')=f(y')$. Since $f\in B$, we obtain $f(x)=f(x')$ and $f(y)=f(y')$. Combining all equalities gives $f(x)=f(y)$. So in all cases, $f(x)=f(y)$. So $f$ must be constant, and we conclude that $B\cap C_K=\C1_A$.

Since $\CC(A)$ is a lattice, $B\vee C_K$ exists. Let $f\in C(X)$. Define the map $g:X\to\C$ by $g(x)=f(k)$ if $x\in[k]_B$, where $k\in K$. Notice that is well defined, since $K$ is a collection of representatives. Moreover, since $X$ is discrete, $g$ is continuous, so $g\in C(X)$. By definition, we have $g\in \bigcap_{x\in X}C_{[x]_B}$, so $g\in B$. Let $h=f-g$. Then $h\in C(X)$, and if $k\in K$, we find $h(k)=f(k)-g(k)=0$, so $h$ is constant on $K$. We conclude that $f=g+h$ with $g\in B$ and $h\in C_K$. Hence $A=C(X)=B\vee C_K$.
	
We find that $C_K$ is a complement of $B$. However, $K$ is not unique, and therefore neither is $C_K$. We conclude that $A$ and $\C1_A$ are the only elements with a unique complement, so $\CC(A)$ is indirectly indecomposable. 
\end{proof}
The proof of this proposition is based on the proof of the directly indecomposability of partition lattices in \cite{Sachs}.
More can be said about $\CC(A)$ when $A$ is a commutative C*-algebra of dimension $n$, namely that $\CC(A)$ is order isomorphic to the lattice of partitions of the set $\{1,\ldots,n\}$. We refer to \cite{Heunen2} for a complete characterization of $\CC(A)$ when $A$ is a commutative finite-dimensional C*-algebra.

We are now ready to prove the main result of this section.

\begin{proof}[Proof of Theorem \ref{thm:fdmaintheorem}]
Let $A$ be a finite-dimensional C*-algebra, and $B$ a C*-algebra. Let $\phi:\CC(A)\to\CC(B)$ an order isomorphism. By Proposition \ref{prop:ArtinianandNoetherianisfinitedimensional}, $\CC(A)$ is Noetherian, and so $\CC(B)$ must be Noetherian as well. Hence Proposition \ref{prop:ArtinianandNoetherianisfinitedimensional} implies that $B$ is finite dimensional. 

It follows from Lemma \ref{lem:rankfunctiononCA} that both $\CC(A)$ and $\CC(B)$ have a rank function assigning to each element its dimension. By Lemma \ref{lem:uniquenessrankfunction} the rank function is unique, hence it follows from Lemma \ref{lem:gradedposetunderorderiso} that $\dim(\phi(C))=\dim(C)$ for each $C\in\CC(A)$. Therefore, we can reconstruct the dimensions of elements of $\CC(A)$ and $\CC(B)$, and the dimension is preverved by $\phi$.

By the Artin-Wedderburn Theorem, there are unique $k,k'\in\N$ and unique $\{n_i\}_{i=1}^n,\{n_i'\}_{i=1}^{k'}$ with $n_i,n'_i\in\N$ such that 
\begin{eqnarray*}
A & \cong & \bigoplus_{i=1}^k\MM_{n_i}(\C);\\
B & \cong & \bigoplus_{i=1}^{k'}\MM_{n'_i}(\C).
\end{eqnarray*}
 Without loss of generality, we may assume that the $n_i$ and $n'_i$ form an descending (but not necessarily strictly descending) finite sequence.

By Lemma \ref{lem:centerisintersectionmaximalcommutatives}, we have the equalities $Z(A)=\bigcap\max\CC(A)$ and $\bigcap\max\CC(B)=Z(B)$. Since the intersection is the meet operation in $\CC(A)$ and $\CC(B)$, and order isomorphism preserve both meets and maximal elements, we find that $\phi(Z(A))=Z(B)$, so $\dim(Z(A))=\dim(Z(B))$. Using Lemma \ref{lem:centerandproducts}, we find that $Z(A)=\bigoplus_{i=1}^nZ(\MM_{n_i}(\C))$, and since the dimension of the center of a matrix algebra is $1$, we find that $\dim Z(A)=k$. In the same way, we find that $\dim Z(B)=k'$, so we must have $k=k'$.

Let $M\in\max\CC(A)$. Then $\phi(M)$ is a maximal element of $\CC(B)$, and since $\phi(Z(A))=Z(B)$, we find that $\phi$ restricts to an order isomorphism $[Z(A),M]\to [Z(B),\phi(M)]$. By Proposition \ref{prop:intervalZAkommaM}, we obtain an order isomorphism $$\prod_{i=1}^k\CC(\C^{n_i})\cong\prod_{i=1}^k\CC(\C^{n'_i}).$$

It is possible that for some $i$ we have $n_i=1$, in which case we have $\CC(\C^{n_i})=\mathbf{1}$. Since we assumed that $\{n_i\}_{i=1}^n$ is a descending sequence, there is a greatest number $r$ below $k$ such that $n_r\neq 1$. Likewise, let $s$ be the greatest number such that $n'_{s}\neq 1$. Then we obtain an order isomorphism
$$\prod_{i=1}^{r}\CC(\C^{n_i})\cong\prod_{i=1}^{s}\CC(\C^{n'_i}).$$
By Proposition \ref{prop:CAisdirectlyindecomposablewhenAfinitedimcomCalg} and Corollary \ref{cor:Hashimoto}, we now find $r=s$, and there is a permutation $\pi:\{1,\ldots,r\}\to\{1,\ldots,r\}$ such that $\CC(\C^{n_i})\cong\CC(\C^{n'_{\pi(i)}})$ for each $i\in\{1,\ldots,r\}$.
Let $\psi_i:\CC(\C^{n_i})\to\CC(\C^{n'_{\pi(i)}})$ be the accompanying order isomorphism. Lemma \ref{lem:rankfunctiononCA} assures that the function assigning to each element of $\CC(\C^{n_i})$ its dimension is a rank function, and similarly the dimension function is a rank function for $\CC(\C^{n'_{\pi(i)}})$. By Lemma \ref{lem:gradedposetunderorderiso}, we find that $\dim(C)=\dim(\psi_i(C))$ for each $C\in\CC(\C^{n_{i}})$. Hence $$n_i=\dim(\C^{n_i})=\dim\left(\psi_i\left(\C^{n_i}\right)\right)=\dim\left(\C^{n'_{\pi(i)}}\right)=n'_{\pi(i)},$$ where the fact that order isomorphisms map greatest elements to greatest elements is used in the third equality.

By definition of $r$, we must have $n_i=n_i'=1$ for all $i\geq r$. Hence we can extend $\pi$ to a permutation $\{1,\ldots,k\}\to\{1,\ldots, k\}$ by setting $\pi(i)=i$ for each $i\geq r$. Hence $k=k'$ and $\{n_1,\ldots,n_k\}$ and $\{n_1',\ldots,n'_k\}$ are the same sets up to permutation. We conclude that $A$ and $B$ must be *-isomorphic.
\end{proof}

We note that since the class of all finite-dimensional C*-algebras and the class of all finite-dimensional von Neumann algebras are the same, a similar statement holds for the functor $\V$ assigning to a von Neumann algebra $M$ the poset $\V(M)$ of its commutative von Neumann subalgebras. Thus if $M$ and $N$ are von Neumann algebras such that $M$ is finite-dimensional, then $\V(M)\cong\V(N)$ implies $M\cong N$.

If $A$ is a finite-dimensional C*-algebra and $B$ is a C*-algebra such that there is an order isomorphism $\phi:\CC(A)\to\CC(B)$, then it might be the case that even though $A$ and $B$ are *-isomorphic, we have $\phi=\CC(f)$ for more than one *-isomorphism $f:A\to B$. For instance, let $A=B=\C^2$. Let $f:\C^2\to\C^2$ be given by $f(\langle a,b\rangle)=\langle b,a\rangle$. Then both $\CC(f)=\CC(1_{\C^2})$.

It might even be the case that $\phi\neq\CC(f)$ for each *-isomorphism $f:A\to B$. For instance, let $A=B=\MM_{2}(\C)$. Then $$\CC(\MM_2(\C))=\{\C1_{\MM_{2}(\C)}\}\cup\{uD_2u^*:u\in\mathrm{SU}(2)\},$$ where $D_2=\{\mathrm{diag}(\lambda_1,\lambda_2):\lambda_1,\lambda_2\in\C\}$. Furthermore, one can show that each *-isomorphism $f:\MM_2(\C)\to\MM_2(\C)$ is of the form $a\mapsto uau^{-1}$ for some $u\in \mathrm{U}(2)$ \cite[Theorem 4.27]{AS1}. Hence $\CC(f):\CC(\MM_2(\C))\to\CC(\MM_2(\C))$ is given by $C\mapsto uCu^*$ for some $u\in\mathrm{U}(2)$.
		
	Choose $v\in\mathrm{U}(2)$ such that $D_2\neq vD_2v^*$, and let $$\phi:\CC(\MM_2(\C))\to\CC(\MM_2(\C))$$ be defined by $\phi(D_2)=vD_2v^*$, $\phi(vD_2v^*)=D_2$, and $\phi(C)=C$ for all other $C\in\CC(\MM_2(\C))$. Then $\phi$ is clearly an order isomorphism. However, $\phi\neq\CC(f)$ for each *-isomorphism\\ $f:\MM_2(\C)\to \MM_2(\C)$.

\section{Outlook and subsequent research}
We have shown that $\CC(A)$ is a complete invariant for finite-dimensional C*-algebras, whereas Mendivil and Hamhalter showed that $\CC(A)$ completely determine commutative C*-algebras. The question is whether there are more classes of C*-algebras which can be classified by $\CC(A)$. An interesting class might be that of \emph{AF-algebras}, i.e., C*-algebras $A$ that can be approximated by finite-dimensional C*-algebras. 

Usually one considers only separable AF-algebras, which are C*-algebras $A$ such that $A=\overline{\bigcup_{i=1}^\infty A_i}$, where $A_1\subseteq A_2\subseteq\ldots$ is an ascending chain of finite-dimensional C*-subalgebras of $A$. It is well known that this class of AF-algebras can be classified by Bratteli diagrams \cite{BratteliInductiveLimits} and by K-theory \cite{Elliott}. 

One could also look at C*-algebras $A$ such that $A=\overline{\bigcup\D}$ for some directed set $\D$ consisting of finite-dimensional C*-subalgebras of $A$. In this case $A$ need not be separable, and therefore C*-algebras of these form are called \emph{non-separable} AF-algebras. It turns out that neither Bratteli diagrams nor K-theory can completely classify this class of C*-algebras \cite{FK}, \cite{Katsura}. However, as one might have noticed, the framework of $\CC(A)$ might be suitable in order to classify non-separable AF-algebras if one compares the definition of non-separable AF-algebras with the content of Proposition \ref{prop:CAisdcpo}. If this is indeed the case, then $\CC(A)$ might be an interesting alternative for K-theory. 

Since $\CC(A)$ is a dcpo, and domain theory (see for instance \cite{CLD}) deals with various properties of dcpos, a first step is the study of the domain-theoretical properties of $\CC(A)$. It has been proven in \cite{HL} that $\CC(A)$ is a so-called \emph{algebraic domain} if and only if $A$ is a so-called \emph{scattered} C*-algebra, i.e., a C*-algebra for each every self-adjoint element has a countable spectrum. It might be interesting to compare the domain-theoretical properties of $\CC(A)$ with those of $\V(M)$, the poset $\V(M)$ of commutative von Neumann subalgebras of a von Neumann algebra $M$. For the von Neumann case, we refer to \cite{DRSB}.

Besides AF-algebras and scattered C*-algebras, there are several other classes of C*-algebras that contain the finite-dimensional C*-algebras as subclass, for instance the von Neumann algebras. By Connes's example of a von Neumann algebra non-isomorphic to its opposite \cite{Connes}, there is no hope that Theorem \ref{thm:fdmaintheorem} can be extended to the class of von Neumann algebras, but it turns out that it is possible for the subclas of \emph{type I} von Neumann algebras (all finite-dimensional C*-algebras are type I von Neumann algebras). More generally, $\CC(A)$ determines $A$ for each type I \emph{AW*-algebra}, where we recall that AW*-algebras were introduced by Kaplansky as algebraic generalisations of von Neumann algebras \cite{Kaplansky}. We refer to the author's PhD thesis \cite{MeThesis} for the prove that $\CC(A)$ determines each type I AW*-algebra up to isomorphism, which result we eventually be published in \cite{MeAW}. 

It might be interesting to look at non-unital C*-algebras as well. The reason why we did not consider non-unital C*-algebras lies within quantum toposophy, from which this research evolved. In quantum toposophy one is forced to work constructively; and whereas constructive Gel'fand duality for unital commutative C*-algebras holds (see for instance \cite{BM} and \cite{CS}), it was not known yet whether the non-unital version holds as well. However, Henry recently proved a non-unital version of constructive Gel'fand duality \cite{Henry}, which suggests that non-unital C*-algebras can be incorporated within quantum toposophy as well. 

In the non-unital case one could proceed as follows. If $\mathbf{CStar}$ denotes the category of C*-algebras with *-homomorphisms as morphisms, we can define the functor $\CC_0:\mathbf{CStar}\to\mathbf{Poset}$ as follows. Given a C*-algebra $A$, we denote the poset of commutative C*-algebras by $\CC_0(A)$, and if $f:A\to B$ is a *-homomorphism, $\CC_0(f):\CC_0(A)\to\CC_0(B)$ is defined by $C\mapsto f[C]$. The functor $\CC_0$ shares some properties with $\CC$, for instance Proposition \ref{prop:Coff} holds as well if we replace $\CC$ by $\CC_0$. It is even the case that we can describe injectivity of a *-homomorphism $f:A\to B$ completely in order theoretic properties of $\CC_0(f)$. This is possible, since $\z$, the C*-algebra consisting of only one element $0$, is always an element of $\CC_0(A)$. Hence $f:A\to B$ is injective if and only if $\CC_0(f):\CC_0(A)\to\CC_0(B)$ has an upper adjoint $\CC_0(f)_*:\CC_0(B)\to\CC_0(A)$ such that $\CC_0(f)_*(\z)=\z$. The latter equality translates to $f^{-1}[\{0\}]=\{0\}$, which exactly states that $f$ is injective. 

We expect that Theorem \ref{thm:fdmaintheorem} holds as well if we replace $\CC$ by $\CC_0$. Some minor details in the proofs must be adjusted, but we expect that most lemmas still hold, since each finite-dimensional C*-algebra $A$ is automatically unital, hence $\CC(A)$ can be regarded as subposet of $\CC_0(A)$. 

However, it might be difficult to prove a non-unital version of Mendivil and Hamhalter's theorem to the effect that $\CC_0(A)$ determines a commutative C*-algebra $A$ up to *-isomorphism, since it is desirable that we can identify C*-ideals of $A$ as elements of $\CC_0(A)$ in order to reconstruct $A$, and it is not clear how to make this identification. This is already visible if we consider $\CC_0(\C^2)=\{\z,C_1,C_2,C_3,\C^2\}$, where 
 \begin{eqnarray*}
	C_1 &  = & \{\langle \mu,0\rangle:\mu\in\C\},\\ C_2 & = & \{\langle 0,\nu\rangle:\nu\in\C\};\\
	C_3 & = & \{\langle\lambda,\lambda\rangle:\lambda\in\C\}.
\end{eqnarray*} 
 The least element and the greatest element of $\CC_0(\C^2)$ are $\z$ and $\C^2$, respectively, and $C_1,C_2,C_3$ are mutually incomparable. Here $C_1$ and $C_2$ are the only elements that correspond to ideals of $\C^2$, but it is not possible to distinguish them from $C_3$ in an order theoretical way.

Thus $\CC_0(A)$ has some advantages as well as disadvantages with respect to $\CC(A)$. If $A$ is unital, it could be useful to consider both posets at the same time. In this case, $\CC(A)$ can be considered a subposet of $\CC_0(A)$. It might be interesting to remark that in quantum toposophy, a pair $(\CC,\D)$ of a poset $\CC$ and a subposet $\D$ of $\CC$ exactly corresponds to a \emph{site} $(\CC,J)$, i.e., a poset $\CC$ equipped with a \emph{Grothendieck topology}, such that the category $\mathrm{Sh}(\CC,J)$ of $J$-sheaves is equivalent to $\mathrm{Sets}^{\D^\op}$. Hence if $A$ is unital, then the pair $(\CC_0(A),\CC(A))$ corresponds to a site $(\CC_0(A),J)$ such that $\mathrm{Sh}(\CC_0(A),J)\cong\mathrm{Sets}^{\CC(A)^\op}$. Since one usually studies the topos $\mathrm{Sets}^{\CC(A)^\op}$, it follows that one can integrate $\CC_0(A)$ in an elegant way in the usual framework of quantum toposophy. For details on Grothendieck topologies and sheaves on posets, we refer to \cite{Me}.

\appendix
\section{Order-theoretical notions}
We recall some definitions in order theory and refer to \cite{DP} for a detailed exposition.

A \emph{poset} $(\CC,\leq)$ is a set $\CC$ equipped with a \emph{(partial) order}\index{order!partial} $\leq$. That is, $\leq$ is a binary relation, which is reflexive, antisymmetric and transitive. We often write $\CC$ instead of $(\CC,\leq)$ if it is clear which order is used. A poset $\CC$ becomes a category if we consider its elements as objects, and taking a unique morphism $C\to D$ if and only if $C\leq D$ for each $C,D\in\D$. 

If either $B\leq C$ or $C\leq B$ for each $B,C\in\CC$, we say that $\leq$ is a \emph{linear order}\index{order!linear}, and we call $\CC$ a \emph{linearly ordered set}\index{linearly ordered set}. A linearly ordered subposet of a poset is called a \emph{chain}\index{chain}. Given a poset $\CC$ with order $\leq$, we define the opposite poset $\CC^\op$ as the poset with the same underlying set $\CC$, but where $B\leq C$ if and only if $C\leq B$ in the original order. 

Let $\D\subseteq\CC$ be a subset. Then $\D$ is called an \emph{upper set} or an \emph{up-set} if $C\in\D$ and $D\geq C$ implies $D\in\D$ for each $C,D\in\CC$; a \emph{lower set} or an \emph{down-set} if $C\in\D$ and $D\leq C$ implies $D\in\D$ for each $C,D\in\CC$; \emph{directed} if for each $D_1,D_2\in\D$ there is a $D_3\in\D$ such that $D_1,D_2\leq D_3$; and \emph{filtered} if for each $D_1,D_2\in\D$ there is a $D_3\in\D$ such that $D_1,D_2\geq D_3$.

If $C\in \CC$, we define the up-set and down-set generated by $C$ by $\up C=\{B\in \CC:B\geq C\}$ and $\down C=\{B\in \CC:B\leq C\}$, respectively. We can define the up-set generated by a subset $\D$ of $\CC$ by $\up\D=\bigcup_{D\in\D}\up D$. Similarly, we define the down-set generated by $\D$ by $\down\D=\bigcup_{D\in\D}\down D$. If $B,C\in\CC$, then the set $\{D\in\CC:B\leq D\leq C\}$ is called the \emph{interval}\index{interval} between $B$ and $C$, and is denoted by $[B,C]$. Notice that $[B,C]=\up B\cap\down C$. If $[B,C]=\{B,C\}$, then we say that $C$ \emph{covers an element}\index{covering of an element} $B$, or that $B$ is \emph{covered} by $C$.

Let $\D\subseteq\CC$. Then $D\in\D$ is called a \emph{maximal} element\index{maximal element of a poset} of $\D$ if $\up D\cap\D=\{C\}$; a \emph{minimal} element\index{minimal element of a poset} of $\D$ if $\down D\cap\D=\{C\}$; a \emph{greatest} element\index{greatest element of a poset} of $\D$ if $B\leq D$ for each $B\in\D$; and a \emph{least} element\index{least element of a poset} of $\D$ if $B\geq D$ for each $B\in\D$. Greatest and least elements are always unique. If $\CC$ itself contains a least and a greatest element, usually denoted by $0$ and $1$, respectively, we say that $\CC$ is a \emph{bounded}\index{bounded poset}. The set of all maximal elements of $\CC$ is denoted by $\max\CC$, whereas $\min\CC$ denotes the set of all minimal elements of $\CC$.

If $\D\subseteq\CC$, then an element $C\in\CC$ such that $D\leq C$ for each $D\in\D$ is called an \emph{upper bound}\index{upper bound} of $\D$. Similarly, $C$ is called a \emph{lower bound}\index{lower bound} of $\D$ if $C\leq D$ for each $D\in\D$. If $\D$ has a least upper bound $C$, usually called the \emph{join} of $\D$, then we write $C=\bigvee\D$. Dually, if $C$ is a greatest lower bound of $\D$, usually called the \emph{meet} of $\D$, then we write $C=\bigwedge\D$. If $\D$ is a two-point set, say $\D=\{D_1,D_2\}$, we write $D_1\vee D_2$ instead of $\bigvee\D$, and $D_1\wedge D_2$ instead of $\bigwedge\D$. We say that $D_1\vee D_2$ and $D_1\wedge D_2$ are the \emph{binary join} and \emph{binary meet}, respectively, of $D_1$ and $D_2$. If we consider $\CC$ as a category, then the join of $\D$ is exactly the same as the coproduct of $\D$, whereas the meet of $\D$ is exactly the product of $\D$.

If $\CC$ is a poset such that all binary meets exists, then we call $\CC$ a \emph{meet-semilattice}\index{meet-semilattice}. If the join of all directed subsets of $\CC$ exist, we call $\CC$ a \emph{directed-complete partial order}, abbreviated by \emph{dcpo}\index{directed-complete partial order}\index{dcpo}. If all binary meets and joins exists, we call $\CC$ a \emph{lattice}\index{lattice}. If all arbitrary meets and joins exist, then we call $\CC$ a \emph{complete lattice}\index{lattice!complete}.
Notice that a complete lattice $\CC$ is automatically bounded, since $\bigvee\CC$ is its greatest element, and $\bigwedge\CC$ is its least element. Moreover, if $\CC$ has all arbitrary meets, it is automatically a complete lattice, since the join of a subset $\D$ of $\CC$ is given by $\bigvee\D=\bigwedge\{C\in\CC:D\leq C\ \forall D\in\D\}$.

Let $\CC_1,\CC_2$ be posets and $\phi:\CC_1\to\CC_2$ a map. Then $\phi$ is called an \emph{order morphism}\index{order morphism} if $C\leq D$ implies $\phi(C)\leq \phi(D)$ for each $C,D\in \CC_1$; an \emph{embedding of posets}\index{embedding of posets} if $\phi(C)\leq \phi(D)$ if and only if $C\leq D$ for each $C,D\in\CC_1$; and an \emph{order isomorphism}\index{order isomorphism} if it is an order morphism such that $\phi\circ\psi=1_{\CC_2}$ and $\psi\circ\phi=1_{\CC_1}$ for some order morphism $\phi:\CC_2\to\CC_1$. Here $1_{\CC_{i}}:CC_i\to\CC_i$ is the identity order morphism. If $\phi$ is an order morphism and there is an order morphism $\psi:\CC_2\to\CC_1$ such that for each $C_1\in\CC_1$ and $C_2\in\CC_2$ we have $\phi(C_1)\leq C_2$ if and only if $C_1\leq\psi(C_2)$, we say that $\psi$ is the \emph{upper adjoint} of $\phi$, and $\phi$ the \emph{lower adjoint} of $\psi$. Clearly an embedding of posets $\phi$ is injective, but the converse does not always hold. Moreover, a map $\phi:\CC_1\to\CC_2$ is an order isomorphism if and only if it is a surjective order embedding. If we consider $\CC_1$ and $\CC_2$ as categories, then the upper adjoint is exactly the same as a right adjoint. If $\CC_1$ and $\CC_2$ are both lattices, then $\phi$ is called a \emph{lattice morphism}\index{lattice morphism} if $\phi(C\wedge D) = \phi(C)\wedge\phi(D)$ and $\phi(C\vee D) = \phi(C)\vee\phi(D)$ for each $C,D\in\CC_1$. If $\phi$ is bijective, then $\phi$ is called a \emph{lattice isomorphism}. A lattice morphism $\phi:\CC_1\to\CC_2$ is automatically an order morphism. An order isomorphism between lattices is automatically a lattice isomorphism.

Let $\CC_1,\ldots,\CC_n$ be posets. Then the \emph{cartesian product}\index{cartesian product of posets} of the $\CC_i$ is defined as the set $\prod_{i=1}^n\CC_i$, sometimes also denoted as $\CC_1\times\ldots\times\CC_n$, equipped by the order defined by $$\langle C_1,\ldots, C_n\rangle\leq \langle D_1,\ldots,D_n\rangle$$ if and only if $C_i\leq D_i$ for each $i=1,\ldots,n$. If $\CC_i$ is a lattice for each $i=1,\ldots,n$, then $\prod_{i=1}^n\CC_i$ is a lattice as well.

\section*{Acknowledgements}
The author would like to thank Jonathan Farley, Chris Heunen, Klaas Landsman, Frank Roumen and Sander Wolters for their comments and suggestions. This research has been financially supported by the Netherlands Organisation for Scientific Research (NWO) under TOP-GO grant no. 613.001.013 (The logic of composite quantum systems).

\end{document}